\newcommand{\SSS}{\color{red}}
\newcommand{\BBB}{\color{black}}
\newcommand{\abs}[1]{{\left|#1\right|}}
\newcommand{\norma}[1]{{\left\Vert#1\right\Vert}}
\theoremstyle{definition}
\newtheorem{definizione}{Definition}[section]
\theoremstyle{plain}
\newtheorem{teorema}{Theorem}[section]
\newtheorem{lemma}[teorema]{Lemma}
\newtheorem{prop}[teorema]{Proposition}
\newtheorem{corollario}[teorema]{Corollary}
\theoremstyle{definition}
\newtheorem{esempio}{Example}[section]
\newtheorem{oss}[esempio]{Remark}
\DeclareMathOperator{\R}{\mathbb{R}}
\DeclareMathOperator{\Om}{\Omega}
\def\Xint#1{\mathchoice
    {\XXint\displaystyle\textstyle{#1}}%
    {\XXint\textstyle\scriptstyle{#1}}
    {\XXint\scriptstyle\scriptscriptstyle{#1}}%
    {\XXint\scriptscriptstyle\scriptscriptstyle{#1}}%
      \!\int}
\def\XXint#1#2#3{{\setbox0=\hbox{$#1{#2#3}{\int}$}
    \vcenter{\hbox{$#2#3$}}\kern-.5\wd0}}
\def\dashint{\Xint-}
\title{A quantitative Talenti-type comparison result with Robin boundary conditions  }
\author{Vincenzo Amato, Rosa Barbato, Simone Cito, Alba Lia Masiello, Gloria Paoli}
\date{\today}
\newcommand{\Addresses}{{
\bigskip 
  \footnotesize 
  \textit{E-mail address}, V. ~Amato: \texttt{v.amato@ssmeridionale.it} 
 \medskip 
 
  \noindent\textsc{Mathematical and Physical Sciences for Advanced Materials and Technologies, Scuola Superiore Meridionale, Largo San Marcellino 10, 80138 Napoli, Italy. } 
 
  \footnotesize 
   \medskip

  \textit{E-mail address}, R.~Barbato: \texttt{rosa.barbato2@unina.it} 
  
   \medskip

     \textit{E-mail address}, G.~Paoli: \texttt{gloria.paoli@unina.it} 
   \medskip 
   
 \textsc{Dipartimento di Matematica e Applicazioni ``R. Caccioppoli'', Universit\`a degli studi di Napoli Federico II, Via Cintia, Complesso Universitario Monte S. Angelo, 80126 Napoli, Italy.}
    \medskip

   \footnotesize
 \textit{E-mail address}, S.~Cito: simone.cito@unisalento.it\texttt{@.it} 
  
   \medskip 
   
 \textsc{Dipartimento di Matematica e Fisica “E. De Giorgi“, Università del Salento, Via per Arnesano,
73100 Lecce, Italy.
 	}

   \medskip

   \footnotesize 
  \textit{E-mail address}, A.L.~Masiello: \texttt{masiello@altamatematica.it} 
  
   \medskip 
   
 \textsc{
 	Holder of a research grant from Istituto Nazionale di Alta Matematica "Francesco Severi" at Dipartimento di Matematica e Applicazioni "R. Caccioppoli", Via Cintia, Complesso Universitario Monte S. Angelo, 80126 Napoli, Italy.}

 \par\nopagebreak 

}}
\begin{document}

\maketitle
\begin{abstract}
The purpose of this paper is to establish a quantitative version of the Talenti comparison principle for solutions to the Poisson equation with Robin boundary conditions. This quantitative enhancement is proved in terms of the asymmetry of domain. The key role is played by a careful analysis of the propagation of asymmetry for the level sets of the solutions of a PDE. As a byproduct, we obtain an alternative proof of the quantitative Saint–Venant inequality for the Robin torsion and, in the planar case, of the quantitative Faber–Krahn inequality for the first Robin eigenvalue. In addition, we complete the framework of the rigidity result of the Talenti inequalities with Robin boundary conditions.


\textsc{Keywords:}  Laplace operator; Talenti comparison; stability; Robin boundary condition\\
\textsc{MSC 2020:}  35J05, 35J25, 35B35 
\end{abstract}
\section{Introduction}

Let $\Omega$ be an open, bounded, Lipschitz  set in $\R^n$, let $f\in L^2(\Omega)$ be a non-identically zero and nonnegative function, and let $\beta$ be a positive parameter. We consider the following problem
\begin{equation}\label{main_problem}
    \begin{cases}
    -\Delta u=f \, &\text{in } \Om, \\[1ex]
    \displaystyle{\frac{\partial u}{\partial\nu}+\beta u=0} & \text{on } \partial\Omega,
    \end{cases}
\end{equation}    
 and its symmetrized version 
\begin{equation}
\label{sym_problem}
      \begin{cases}
    -\Delta v=f^\sharp \, & \text{in } \Om^\sharp, \\[1ex]
    \displaystyle{\frac{\partial v}{\partial\nu}+\beta v=0} & \text{on } \partial\Omega^\sharp,
    \end{cases}
\end{equation}
where $\Omega^{\sharp}$ is the ball centered at the origin with the same measure as $\Omega$ and $f^\sharp$ is the Schwarz rearrangement of $f$ (see Definition \ref{rear}). 
An interesting question is to consider the class of domains $\Omega$ with prescribed volume, a given radial function $f^\sharp$ on $\Om^\sharp$ and optimize a functional of the type
\vspace{2mm}
\begin{equation}
    \label{max:interpretation}
   \mathcal{F}(\Omega):=\dfrac{\norma{u}_{X(\Om)}}{\norma{f}_{Y(\Om)}},
\end{equation}
where $\norma{\cdot}_{X(\Om)}$ and $\norma{\cdot}_{Y(\Om)}$ are suitable norms that have to be specified, $u$ solves the Poisson problem \eqref{main_problem} with a given datum $f$ such that its Schwarz rearrangement is $f^\sharp$.

In the case of homogeneous Dirichlet boundary conditions, this question is addressed in the classical work of Talenti  \cite{talenti76}. Given an open and bounded set $\Omega\subset\mathbb{R}^n$, the author considers the problems
\begin{equation*}
    \begin{cases}
        -\Delta w =f &\text{in}\;\Omega\\
        w=0 &\text{on}\;\partial\Omega,
    \end{cases} \quad \quad  \begin{cases}
        -\Delta z =f^{\sharp} &\text{in}\;\Omega^{\sharp}\\
        z=0 &\text{on}\;\partial\Omega^{\sharp},
    \end{cases}
\end{equation*}
with $f\in L^{\frac{2n}{n+2}}(\Omega)$ if $n>2$, or  $f\in L^p(\Omega)$, with  $p>1$, if $n=2$. He proves the point-wise inequality
\begin{equation}\label{tal:diri}
    w^\sharp(x)\le z(x) \quad  \forall x \in \Omega^\sharp.  
\end{equation}
From  inequality \eqref{tal:diri}, it follows that, once  the measure of $\Omega$ and the rearrangement of $f$ are prescribed, the ratio giving $\norma{w}_{X(\Omega)}/\norma{f}_{Y(\Omega)}$ (i.e. the functional in \eqref{max:interpretation})
is maximized by the function $z$ that solves the Poisson problem on the ball. Notice that, in this case, both norms can be chosen to be any rearrangement-invariant norm. 

There are several generalization of this classical result, see, for instance, \cite{betta_mercaldo,T2} for nonlinear operators in divergence form, \cite{AFLT} for anisotropic elliptic operators, \cite{ALT} for the parabolic case,   \cite{AB, T3} for higher-order operators,  and \cite{diaz},  where the Steiner symmetrization is used in place of the Schwarz one. 

 All the above mentioned results refer to Dirichlet boundary conditions, that usually well behave under symmetrization. Actually,  for a long time, it was thought that comparison results could not be established through spherical rearrangement arguments in the case of Robin boundary conditions. However, in \cite{ANT} the authors prove a comparison involving the Lorentz norms of $u$ and $v$, under the assumptions that $f$ is a non-negative function in $L^2(\Omega)$ and that $\beta$ is a positive parameter, thereby obtaining the inequalities

\begin{equation}
	    \label{diseq_f_generica}
	    \norma{u}_{L^{k,1}(\Omega)} \, \leq \norma{v}_{L^{k,1}(\Omega^\sharp)} \, \; \forall \, 0 < k \leq \frac{n}{2n-2},
	\end{equation}
    and
	\begin{equation}
	    \label{fgen2}
	    \norma{u}_{L^{2k,2}(\Omega)} \, \leq \norma{v}_{L^{2k,2}(\Omega^\sharp)} \, \; \forall \, 0 < k \leq \frac{n}{3n-4}.
	\end{equation}
Moreover, in the case $f\equiv 1$, they also prove
\begin{equation*}
    \norma{u}_{L^p(\Omega)}\le \norma{v}_{L^p(\Omega^\sharp)}, \quad p=1,2,
\end{equation*}
 and, if $n=2$, the pointwise comparison holds:
\begin{equation}\label{trombetti}
    u^\sharp(x) \le v(x), \quad \text{ for all } x\in\Omega^\sharp.
\end{equation}
This result makes it possible to solve the optimization problem \eqref{max:interpretation} when the norm of \SSS$u$ \BBB coincides with one of the norms appearing in inequalities \eqref{diseq_f_generica} and \eqref{fgen2}. 
However, differently from the Dirichlet case, in this setting one cannot choose an arbitrary rearrangement-invariant norm, except in the case 
 $f\equiv1$ and $n=2$ , where \eqref{trombetti} holds.

Also for Robin boundary conditions,  different generalizations of these comparison results have been studied: see \cite{AGM} for the $p$-Laplacian, \cite{yabo, San2} for the anisotropic case,  \cite{ACNT,amato2022isoperimetric} for mixed boundary conditions, \cite{nunzia2022sharp} for the Hermite operator. 


 Once obtained the comparison results in terms of the inequalities above,  the next natural step 
is to characterize the equality case,  namely the rigidity of the inequalities.  In the case of Dirichlet boundary conditions, the rigidity is proved in \cite{lions_remark}, while, in the case of Robin boundary conditions, the equality case in 
\eqref{fgen2} is characterized in \cite{mp2}, where the authors proved that equality holds if and only if the set $\Omega$ is a ball and the functions $u$ and $f$ are radially symmetric and decreasing.

 Nevertheless, the characterization of the equality case in \eqref{diseq_f_generica} is left by the authors in \cite{mp2} as an open problem. This is the starting point of our work. 
Indeed, the first Theorem that we prove is the following.

\begin{teorema}\label{teo:rigidity:k1}
Let $\Omega\subseteq\R^n$ be an open, bounded and Lipschitz set, let $f\in L^2(\Omega)$ be a nonnegative function. Let $u$ be the solution to \eqref{main_problem} and let $v$ be the solution to \eqref{sym_problem}, if 
   \begin{equation}
       \label{ip:rigidity:k1}
        \norma{v}_{L^{k,1}(\Omega^\sharp)}=\norma{u}_{L^{k,1}(\Omega)},
   \end{equation}
   for some $\displaystyle{0<k\le \frac{n}{2n-2}}$,
    then,  there exists $x_0\in \R^n$ such that
\begin{equation*}
    \Omega=\Omega^\sharp +x_0, \qquad u(\cdot+x_0)=u^\sharp(\cdot), \qquad f(\cdot+ x_0)=f^\sharp(\cdot).
\end{equation*}
    \end{teorema}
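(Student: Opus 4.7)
The plan is to trace the proof of the Lorentz comparison \eqref{diseq_f_generica} given in \cite{ANT} and force equality backward through every intermediate step. That proof tests the weak formulation of \eqref{main_problem} with $(u-t)_+$, applies the coarea formula and the Cauchy--Schwarz inequality to control $-\mu'(t)$, where $\mu(t)=|\{u>t\}|$ denotes the distribution function of $u$, and then invokes the Euclidean isoperimetric inequality for the super-level set $\{u>t\}$ together with a Hardy--Littlewood step to estimate $\int_{\{u>t\}} f$ by the decreasing rearrangement of $f$. Integrating the resulting differential inequality against the weight $t^{1/k-1}$ yields the bound on the $L^{k,1}$ norm. Under \eqref{ip:rigidity:k1}, each inequality in this chain must hold as an equality.

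As a first step, I would deduce from \eqref{ip:rigidity:k1} the identity $\mu(t)=\nu(t)$ for a.e. $t\ge 0$, where $\nu(t)$ is the analogous distribution function for $v$. The inequality \eqref{diseq_f_generica} is in fact obtained by integrating the pointwise comparison $\mu(t)\le\nu(t)$ (on the range $t\in(0,\max v)$) against a strictly positive weight that arises from writing the $L^{k,1}$ norm as a weighted integral of the distribution function; hence equality of the two Lorentz norms forces $\mu=\nu$ pointwise almost everywhere. Equivalently, $u^{*}(s)=v^{*}(s)$ for all $s\in(0,|\Om|)$.

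From $\mu=\nu$ I would then extract equality in the Euclidean isoperimetric inequality $P(\{u>t\}) \ge n\omega_n^{1/n}\,\mu(t)^{(n-1)/n}$ for a.e. $t>0$. Since $u>0$ on $\overline\Om$ by Hopf's lemma, for $t$ close to $0$ the level set $\{u>t\}$ contains a portion of $\partial\Om$, so its full Euclidean perimeter incorporates the trace on $\partial\Om$. The equality case of the isoperimetric inequality then forces $\{u>t\}$ to be a ball in $\R^n$ for a.e. such $t$; passing to the limit $t\to 0^+$ and using the nested structure of super-level sets yields $\Om=\Om^\sharp+x_0$ for some $x_0\in\R^n$. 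Once $\Om$ is a ball, the equality case in the Cauchy--Schwarz step forces $|\nabla u|$ to be constant on each level set, so $u$ is radially symmetric and decreasing about $x_0$; then $-\Delta u=f$ in $\Om$ gives $f$ radial, and equality in the Hardy--Littlewood step pins it down to $f=f^\sharp(\cdot-x_0)$.

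The main obstacle I expect is the small-$t$ regime, where the super-level set meets $\partial\Om$. Propagating the Lorentz equality to the pointwise identity $\mu=\nu$ requires ruling out any gap between the integral identity and the pointwise one, which may need a monotonicity or semicontinuity argument on the distribution functions. Moreover, once one knows that almost every super-level set of $u$ is a ball, one has to verify that these balls are concentric---so that they glue into a single radial function---and that, since $\{u>t\}$ exhausts $\Om$ as $t\to 0^+$, the limit $\Om$ is itself a ball with the same center. Making these last two points completely rigorous in the presence of the Robin boundary contribution to the perimeter is where the argument will be most delicate.
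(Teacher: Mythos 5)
There is a genuine gap at the very first step. You assert that the Lorentz comparison \eqref{diseq_f_generica} is obtained ``by integrating the pointwise comparison $\mu(t)\le\phi(t)$ against a strictly positive weight,'' and you use this to conclude $\mu=\phi$ a.e.\ from the norm equality. But no such pointwise comparison of distribution functions is available in the Robin setting: that is precisely what fails here and why \cite{ANT} only obtain norm inequalities for restricted ranges of $k$ (the pointwise statement \eqref{trombetti} holds only for $n=2$, $f\equiv1$). The only pointwise information is the trivial $\mu(t)\le\phi(t)=\abs{\Omega}$ for $t\le v_m$ (see \eqref{mf}); for $t>v_m$ the sign of $\phi-\mu$ is not controlled a priori. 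Consequently your deduction ``equality of norms $\Rightarrow$ $\mu=\phi$ a.e.'' does not follow as stated. The paper instead integrates the differential inequality \eqref{ineq_fundamental_senza_alpha} against $t\mu(t)^{1/k-(2n-2)/n}$, runs a Gronwall argument, and arrives at the identity \eqref{eq:k1:mu:phi}, whose right-hand side is a sum of nonnegative terms supported on $[0,v_m]$; the norm equality kills these terms and yields $\mu=\phi=\abs{\Omega}$ only on $[0,v_m]$, hence $u_m=v_m$, hence (by rigidity in the isoperimetric inequality applied to the boundary-flux chain preceding \eqref{mf}) $\Omega$ is a ball.

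Your subsequent plan --- forcing equality in the isoperimetric, Cauchy--Schwarz and Hardy--Littlewood steps level set by level set --- also skips the step of showing that the single scalar equality \eqref{ip:rigidity:k1} propagates to a.e.-in-$t$ equality in the differential inequality, which again requires the integral/Gronwall machinery rather than a direct pointwise argument; and the presence of the Robin boundary term $\beta^{-1}\int_{\partial U_t^{ext}}u^{-1}$ means equality in the isoperimetric inequality cannot be isolated without first controlling that term. The paper avoids all of this by a cleaner reduction: once $\Omega$ is a ball, it shows $u$ is constant on $\partial\Omega$ (comparing $\int_{\partial\Omega}u$ with $P(\Omega)u_m$), subtracts the boundary values so that $w=u-u_m$ and $z=v-v_m$ solve Dirichlet problems, applies Talenti to get $\mu\le\phi$ everywhere, upgrades the norm equality to $w^\sharp=z$, and concludes by the Alvino--Lions--Trombetti rigidity theorem (Theorem \ref{teo:alt}). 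You would need to either adopt this reduction or supply the missing propagation-of-equality argument for your route to go through.
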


Once a comparison and a rigidity result hold, we can inquire whether the comparison can be improved, asking:
\begin{quote} \it
     if equality \emph{almost} holds in \eqref{diseq_f_generica} and \eqref{fgen2}, is it true that the set $\Omega$ is \emph{almost} a ball, and the function $u$ and $f$ are \emph{almost} radially symmetric and decreasing?
\end{quote}

In the case of Dirichlet boundary condition, a quantitative result of this type was proved in \cite{ABMP}, where the authors show that whenever the comparison \eqref{tal:diri} is in force, then there exist some positive constants $ \theta_1=\theta_1(n), \theta_2=\theta_2(n)$ and $ K_1:= K_1(n, \abs{\Omega}, f^\sharp),\, K_2:=  K_2(n, \abs{\Omega}, f^\sharp),\, K_3:=  K_3(n, \abs{\Omega}, f^\sharp),$ such that
\begin{equation}\label{esti_tot}
     ||z-w^{\sharp}||_{L^\infty(\Omega)}\ge K_1 \alpha^3(\Omega)+ K_2 \inf_{x_0\in\R^n}\norma{w-w^\sharp(\cdot+x_0)}_{L^1(\R^n)}^{ \theta_1}\hspace{-1mm} +K_3\inf_{x_0\in\R^n} \norma{f-f^\sharp(\cdot +x_0)}_{L^1(\R^n)}^{ \theta_2}.
\end{equation}
Moreover, the dependence of $ K_1,\,  K_2$ and $  K_3$ on $\abs{\Omega}$ and $f^\sharp$ is explicit.
In this result, the distance between the functions $u$ and $f$ from their symmetrized is measured in terms of the $L^1$ distance, while the distance of $\Omega$ from being a  ball is measured via the \emph{Fraenkel Asymmetry}, that is 

 \begin{equation}\label{asimm}
	\alpha(\Omega):=\min_{x \in \R^{n}}\bigg \{  \dfrac{|\Omega\Delta B_r(x)|}{|B_r(x)|} \;,\; |B_r(x)|=|\Omega|\bigg \},
\end{equation}
where the symbol $\Delta$ stands for the symmetric difference. 

The authors in \cite{ABMP} also derive an analogous estimate for the difference of  
$L^p$-norm of the function, extending to any right-hand side $f$ the result proved for 
$f=1$ in \cite{kim}, which reads as follows:\begin{equation}\label{lpdiri}
    \norma{z}_p^p-\lVert w^\sharp\rVert_p^p \geq K_4\alpha^{2+p}(\Omega) \qquad \forall p >1.
     \end{equation}

It is also worth noting that in \cite{paolo} the authors refine the estimate in \eqref{esti_tot} in the special case where $\Omega$ is a ball (so that $\alpha(\Omega)=0$) and the datum $f$ is a characteristic function, obtaining the optimal exponent in
$$ \norma{z}_p^p-\lVert w^\sharp\rVert_p^p \geq K\inf_{x_0\in\R^n} \norma{f-f^\sharp(\cdot +x_0)}_{L^1(\R^n)}^{ 2}\qquad \forall p >1.$$
For further generalizations, see also \cite{amato_barbato_2024, romeo, MS}.
 
The main objective of this paper is to study the stability of \eqref{diseq_f_generica} and \eqref{fgen2} in the same spirit as the results in \cite{ABMP}. First of all, we improve inequality \eqref{diseq_f_generica} by adding a reminder term that measures the distance of the set $\Omega$ from a ball. This is our second main result

\begin{teorema}
\label{teo:quant_k1}
Let $\Omega$ be an open, bounded, and Lipschitz set, let $f\in L^2(\Omega)$ be a nonnegative function. Let $u$ be the solution to \eqref{main_problem} and let $v$ be the solution to \eqref{sym_problem}, then there exists a positive constant $C_1=C_1(\abs{\Omega},\norma{f}_{1},\beta,n)$ such that, for $0 <k\le\frac{n}{2n-2}$,

    \begin{equation}\label{quant_lk1}
        \norma{v}_{L^{k,1}(\Omega^\sharp)}-\norma{u}_{L^{k,1}(\Omega)}\ge C_1 \alpha^2(\Omega),
    \end{equation}
    where $$C_1=\frac{\abs{\Omega}^{\frac{1}{k}+\frac{1}{n}-1}\norma{f}_1}{\beta n\omega_n^\frac{1}{n}}\min\left\{\frac{1}{2^{\frac{1}{k}+5}\gamma_n}, \frac{\beta\abs{\Omega}^\frac{1}{n}}{2^{\frac{1}{k}+3+\frac{2}{n}}n\omega_n^\frac{1}{n}}\right\},$$

     $\omega_n$ is the measure of the unit ball of $\R^n$ and $\gamma_n$ is the constant appearing in the quantitative isoperimetric inequality (see Theorem \ref{quant_isop_prop} ).
\end{teorema}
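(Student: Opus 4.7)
The plan is to revisit the proof of the Lorentz-norm comparison \eqref{diseq_f_generica} from \cite{ANT}, tracking at each step the deficit coming from the non-quantitative isoperimetric inequality, and then to convert this deficit into a power of $\alpha(\Omega)$ via a propagation-of-asymmetry argument for the super-level sets $U_t = \{u > t\}$. This mirrors the strategy of \cite{ABMP} in the Dirichlet case, with extra care for the boundary term produced by the Robin condition.

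First, I recall that testing \eqref{main_problem} against the truncation $(u - t)_+$ and using the Robin condition yields
\begin{equation*}
  \int_{U_t} |\nabla u|^2 + \beta \int_{\partial\Omega \cap U_t} u\,(u-t)\, d\mathcal{H}^{n-1} = \int_{U_t} f\,(u-t).
\end{equation*}
Differentiating in $t$, combining Cauchy--Schwarz with the Fleming--Rishel identity $-\mu'(t) = \int_{\{u=t\}\cap \Omega}|\nabla u|^{-1}\, d\mathcal{H}^{n-1}$, and invoking the isoperimetric inequality $P(U_t) \ge n\omega_n^{1/n}\mu(t)^{(n-1)/n}$ gives the ODE for the decreasing rearrangement $u^*$ whose integration produces \eqref{diseq_f_generica}. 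I would now substitute the stability version (Theorem \ref{quant_isop_prop}),
\begin{equation*}
  P(U_t) \ge n\omega_n^{1/n}\mu(t)^{(n-1)/n}\left(1 + \frac{\alpha^2(U_t)}{\gamma_n}\right)^{1/2},
\end{equation*}
and carry the extra factor through the chain of inequalities, obtaining a pointwise improvement of the form $u^*(s) + \varepsilon(s) \le v^*(s)$ almost everywhere on $(0, |\Omega|)$, where $\varepsilon(s)$ is an explicit positive functional of $\alpha^2(U_{u^*(s)})$. Using $\norma{u}_{L^{k,1}(\Omega)} = \frac{1}{k}\int_0^{|\Omega|}s^{1/k - 1}u^*(s)\, ds$, the theorem reduces to lower-bounding $\int_0^{|\Omega|} s^{1/k-1}\varepsilon(s)\, ds$ by $\alpha^2(\Omega)$ times the claimed constant.

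The main obstacle is the propagation-of-asymmetry step: the integral above involves asymmetries of the \emph{level sets}, not of $\Omega$. I would argue dichotomically. Fix a threshold $T_0 > 0$. Either $\alpha(U_t) \ge \alpha(\Omega)/2$ on all of $[0, T_0]$, in which case the improved ODE directly yields the desired bound with a geometric constant proportional to $\gamma_n^{-1}$; or there exists $t^* \in (0, T_0]$ with $\alpha(U_{t^*}) < \alpha(\Omega)/2$, and then the $L^1$-Lipschitz continuity of the Fraenkel asymmetry, combined with $U_0 = \Omega$ (which holds by the strong maximum principle applied to \eqref{main_problem}, since $f \ge 0$, $f \not\equiv 0$, and $\beta > 0$), forces
\begin{equation*}
  \frac{\alpha(\Omega)}{2} \le \alpha(U_0) - \alpha(U_{t^*}) \le C\, \frac{|\Omega \setminus U_{t^*}|}{|\Omega|}.
\end{equation*}
This lower bound on the outer mass $|\Omega|-\mu(t^*)$ is then fed back into the Robin boundary term $\beta\int_0^{t^*}\int_{\partial\Omega \cap U_t} u\, d\mathcal{H}^{n-1}\, dt$, producing the alternative control proportional to $\beta |\Omega|^{1/n}$ in the $\min\{\cdot,\cdot\}$ that defines $C_1$.

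Finally, I would choose $T_0$ as a fixed fraction of the natural $L^\infty$-bound $\|f\|_1/(\beta n\omega_n^{1/n}|\Omega|^{(n-1)/n})$, obtained by integrating \eqref{main_problem} over $\Omega$ and using the Robin boundary condition together with the fact that $v$ attains its minimum on $\partial\Omega^\sharp$; this explains the overall prefactor of $C_1$. Collecting the two lower bounds yielded by the two branches of the dichotomy and taking their minimum produces the explicit constants in the statement, with the powers of $2$ in the denominators arising as the explicit cost of the halvings used in the dichotomy and in passing from $\alpha(U_t)$ to $\alpha(\Omega)$ through the continuity estimate.
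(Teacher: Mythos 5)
Your overall skeleton is the right one and matches the paper's: insert the quantitative isoperimetric inequality into the level-set differential inequality (this is exactly \eqref{ineq_fundamental}), and then run a dichotomy in which either the asymmetry of $U_t$ propagates from $\alpha(\Omega)$ on a range of levels of length comparable to $v_m$ (Lemma \ref{lembrasco}), or the outer mass $|\Omega|-\mu(t)$ is at least $|\Omega|\alpha(\Omega)/4$ at a low level. The choice of the threshold as a fraction of $\|f\|_1/(\beta n\omega_n^{1/n}|\Omega|^{(n-1)/n})=v_m$ and the interpretation of the two branches of the $\min$ are also correct. However, there is a genuine gap at the core of your argument: you claim that carrying the isoperimetric deficit ``through the chain of inequalities'' yields a \emph{pointwise} improvement $u^*(s)+\varepsilon(s)\le v^*(s)$ a.e.\ on $(0,|\Omega|)$. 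No such pointwise comparison is available in the Robin setting for general $f$ and $n$: the differential inequality for $\mu$ contains the extra term $\frac{1}{\beta}\int_{\partial U_t\cap\partial\Omega}u^{-1}\,d\mathcal{H}^{n-1}$, which cannot be controlled level by level; it is only controlled after multiplying by $t$, integrating in $t$ (Lemma \ref{lemma3.3}), and applying the Gronwall-type Lemma \ref{lem_Gronwall}. Indeed, the paper stresses that the pointwise comparison \eqref{trombetti} holds only for $f\equiv 1$ and $n=2$, which is precisely why \eqref{diseq_f_generica} is stated for Lorentz norms rather than for arbitrary rearrangement-invariant norms. The correct intermediate output is the \emph{integrated} improvement
\begin{equation*}
\norma{v}_{L^{k,1}(\Omega^\sharp)}-\norma{u}_{L^{k,1}(\Omega)}\ \ge\ \frac{1}{\gamma_n}\int_0^{v_m}\frac{t}{v_m}\,\mu(t)^{\frac1k}\,\alpha^2(U_t)\,dt,
\end{equation*}
obtained after the weighted integration, the integration by parts, and the Gronwall step; your pointwise statement would have to be replaced by this.

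The second branch of your dichotomy is also mis-specified. When the asymmetry fails to propagate, i.e.\ $|\Omega|-\mu(t^*)\ge \tfrac14|\Omega|\alpha(\Omega)$ for some small $t^*$, you propose to feed this back into the Robin boundary term $\beta\int_0^{t^*}\int_{\partial\Omega\cap U_t}u\,d\mathcal{H}^{n-1}dt$. That term enters the inequality with a sign favorable to $u$ and is only ever bounded from \emph{above} (by Lemma \ref{lemma3.3}), so a lower bound on the outer mass does not produce a lower bound on the norm gap through it. What actually works is to exploit the source term: since $\phi(t)=|\Omega|$ for all $t\le v_m$ by \eqref{mf} while $\mu(t)\le|\Omega|\bigl(1-\tfrac{\alpha(\Omega)}{4}\bigr)$ for $t\ge t^*$, the difference $F(\phi(t))-F(\mu(t))$, with $F(\ell)=\int_0^\ell w^{\delta}\bigl(\int_0^w f^*\bigr)dw$, is bounded below by $F(|\Omega|)-F\bigl(|\Omega|(1-\tfrac{\alpha(\Omega)}{4})\bigr)\gtrsim F'(\xi)|\Omega|\alpha(\Omega)$ on the whole interval $[v_m/2,v_m]$, and this is what survives the Gronwall subtraction and yields the branch of the $\min$ proportional to $\beta|\Omega|^{1/n}$. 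Until the pointwise claim is replaced by the integrated one and the second branch is rerouted through the source term, the proof does not close.
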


Then, we perform the same analysis for the inequality \eqref{fgen2}, obtaining the following.
\begin{teorema}\label{teo:quant:k2}
Let $\Omega\subseteq\R^n$ be an open, bounded, and Lipschitz set, let $f\in L^2(\Omega)$ be a nonnegative function.
    Let $u$ be the solution to \eqref{main_problem} and let $v$ be the solution to \eqref{sym_problem}, then there exists a positive constant $C_2=C_2(\abs{\Omega},\norma{f}_1,\beta,n)$ such that for $0<k\le\frac{n}{3n-4}$,

    \begin{equation}\label{quant_l2k}
        \norma{v}_{L^{2k,2}(\Omega^\sharp)}^2-\norma{u}_{L^{2k,2}(\Omega)}^2\ge C_2 \alpha^2(\Omega),
    \end{equation}
    where $$C_2=\left(\frac{\abs{\Omega}^{\frac{1}{n}-1}\norma{f}_1}{\beta n\omega_n^\frac{1}{n}}\right)^2\abs{\Omega}^\frac{1}{k}\min\left\{\frac{1}{2^{\frac{1}{k}+5}\gamma_n}, \frac{\beta\abs{\Omega}^\frac{1}{n}}{2^{\frac{1}{k}+5+\frac{2}{n}}n\omega_n^\frac{1}{n}}\right\},$$
     $\omega_n$ is the measure of the unit ball of $\R^n$ and $\gamma_n$ is the constant appearing in the quantitative isoperimetric inequality (see Theorem \ref{quant_isop_prop} ).
\end{teorema}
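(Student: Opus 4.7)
The plan is to derive Theorem \ref{teo:quant:k2} from Theorem \ref{teo:quant_k1} by factorising the difference of squared Lorentz norms, so that no new isoperimetric or PDE input beyond the one already used for the $L^{k,1}$ case is required. Writing both quantities via decreasing rearrangements yields
$$\norma{v}_{L^{2k,2}(\Omega^\sharp)}^2-\norma{u}_{L^{2k,2}(\Omega)}^2=\int_0^{|\Omega|} s^{\frac{1}{k}-1}\bigl(v^*(s)-u^*(s)\bigr)\bigl(v^*(s)+u^*(s)\bigr)\,ds.$$
Since $f\geq 0$ forces $u\geq 0$ and hence $u^*\geq 0$, one has $v^*+u^*\geq v^*$ pointwise in $(0,|\Omega|)$, and the task reduces to bounding $v^*$ uniformly from below so that the residual factor can be pulled out of the integral.

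For this lower bound I would integrate the PDE in \eqref{sym_problem} over $\Omega^\sharp$, apply the divergence theorem together with the Robin condition $\partial_\nu v+\beta v=0$, and obtain
$$\beta\int_{\partial\Omega^\sharp} v\,d\mathcal{H}^{n-1}=\int_{\Omega^\sharp} f^\sharp=\norma{f}_1.$$
Because $v$ is radial and radially decreasing, its trace on $\partial\Omega^\sharp$ is constant and coincides with $v^*(|\Omega|)$, yielding
$$v^*(s)\;\geq\;v^*(|\Omega|)=\frac{\norma{f}_1}{\beta\,P(\Omega^\sharp)}=\frac{\norma{f}_1\,|\Omega|^{1/n-1}}{\beta\,n\omega_n^{1/n}}\qquad\text{for every }s\in(0,|\Omega|).$$

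Plugging this into the integral and recognising that what remains is precisely $\norma{v}_{L^{k,1}(\Omega^\sharp)}-\norma{u}_{L^{k,1}(\Omega)}$, an application of Theorem \ref{teo:quant_k1} gives
$$\norma{v}_{L^{2k,2}(\Omega^\sharp)}^2-\norma{u}_{L^{2k,2}(\Omega)}^2\;\geq\;v^*(|\Omega|)\,C_1\,\alpha^2(\Omega);$$
multiplying out the explicit expressions of $v^*(|\Omega|)$ and $C_1$ then produces the stated constant $C_2$. Note that the admissible range $0<k\leq n/(3n-4)$ of the present theorem is contained in the range $0<k\leq n/(2n-2)$ of Theorem \ref{teo:quant_k1}, so the invocation is legitimate. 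I do not expect a genuine obstacle: the argument is essentially a reduction, and the only point requiring care is the constant bookkeeping, namely checking that the two regimes inside the $\min$ defining $C_2$ are correctly inherited from the corresponding regimes in $C_1$ after multiplication by $v^*(|\Omega|)$.
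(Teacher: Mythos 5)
Your reduction breaks down at the factorisation step, and the failure is not a matter of bookkeeping but of sign. After writing
$\norma{v}_{L^{2k,2}}^2-\norma{u}_{L^{2k,2}}^2$ as an integral of $s^{\frac1k-1}\bigl(v^*(s)-u^*(s)\bigr)\bigl(v^*(s)+u^*(s)\bigr)$, you pull out the factor $v^*+u^*\ge v^*(\abs{\Omega})=v_m$ to reduce to the $L^{k,1}$ difference. That step is only legitimate if $v^*(s)-u^*(s)\ge 0$ for (almost) every $s$; on any set where $v^*-u^*<0$, replacing $v^*+u^*$ by its infimum makes the integrand \emph{larger}, not smaller, so the inequality can go the wrong way. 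But the pointwise comparison $u^*\le v^*$ (equivalently $u^\sharp\le v$) is precisely what is \emph{not} available under Robin boundary conditions: the paper only has $\mu(t)\le\phi(t)$ for $t\le v_m$ (see \eqref{mf}), and the pointwise comparison \eqref{trombetti} is known only for $f\equiv1$ and $n=2$. This is the central obstruction the whole paper is organised around, so your argument assumes exactly the missing ingredient. A further warning sign: if the reduction were valid it would prove \eqref{quant_l2k} for the full range $0<k\le\frac{n}{2n-2}$, strictly more than both the theorem and the underlying comparison \eqref{fgen2} of [ANT], whose admissible range $0<k\le\frac{n}{3n-4}$ is smaller for a reason.

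The paper's actual proof does not go through Theorem \ref{teo:quant_k1} at all. It works at the level of distribution functions: it multiplies the differential inequality \eqref{ineq_fundamental} by $t\mu(t)^{\delta}$, integrates, and splits into cases according to whether $u_m\ge v_m/2$ and whether the level $s_\Omega$ of \eqref{ess} exceeds $v_m/2$; the asymmetry term is extracted either directly via Lemma \ref{lembrasco} (Cases 1 and 2.1) or, in Case 2.2, via a second Gronwall-type argument applied to the weight $tF(\mu(t))\mu(t)^{-2+\frac2n}$ — and it is exactly the required monotonicity of $\ell\mapsto F(\ell)\ell^{-2+\frac2n}$ that forces the restriction $k\le\frac{n}{3n-4}$. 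Your computation of $v_m=\norma{f}_1/(\beta P(\Omega^\sharp))$ is correct and is indeed how the constant $v_m$ enters $C_2$, but to make the overall strategy work you would first have to establish a sign or weak-majorisation property of $v^*-u^*$ that the Robin setting does not provide.
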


 Using the above Theorems \ref{teo:quant_k1} and \ref{teo:quant:k2} we are able to deduce an alternative, immediate, proof of the quantitative versions of two classical inequalities within the class Lipschitz domains: the Saint-Venant inequality for the Robin torsional rigidity (Corollary \ref{cor:sv}) and, in dimension 2, the Bossel-Daners inequality for the first Robin eigenvalue (Corollary \ref{cor:bd}). The original results are proved respectively in \cite{nahon} and \cite{BFNT_Faber_Krahn} and hold in a more general class of admissible shapes.

Before giving the last comparison result, we spend a few words about the link with the Dirichlet case, that formally corresponds to the case $\beta=+\infty$. In particular, we focus on the results involving the torsional rigidity and the first eigenvalue. Indeed, it is known that, given $\Omega$ a Lipschitz domain, for $\beta\to +\infty$ the Robin eigenvalues converge to the Dirichlet ones (see, as a reference, \cite[Chapter 4]{henrot_shape} ) and the Robin torsional rigidity to the Dirichlet one (see \cite{BO} for the torsion problem), as well. However, we observe that, we cannot recover any additional information about the Dirichlet problem in terms of quantitative estimates if we send $\beta\to +\infty$ in \eqref{quant_lk1} or \eqref{quant_l2k}, since both constants $C_1$ and $C_2$, defined in Theorem \ref{teo:quant_k1} and in Theorem \ref{teo:quant:k2} respectively, go to $0$.

Anyway, in the special case $f=1$ and $n=2$ where the point-wise comparison \eqref{trombetti} holds, we obtain a result that resembles \eqref{esti_tot}, and for which the constant does not degenerate as $\beta\to \infty$.

\begin{teorema}\label{quantitativa_puntuale}
     Let $n=2$ and $f\equiv1$  and let $u$,  $v$ be the solutions to \eqref{main_problem} and \eqref{sym_problem} respectively, and let $u^\sharp$ be the Schwarz rearrangement of $u$.
 Then, there exists a  constant $ C_3=C_3(\Omega)$, such that  
\begin{equation}\label{stima:quant:punt}
     {||v-u^{\sharp}||_{L^\infty(\Omega^\sharp)}}\geq  C_3\alpha^3(\Omega),
        \end{equation}
where  
\begin{equation*}\label{cappa1}
C_3= \abs{\Omega} \min\left\{\frac{1}{2^{7}\pi}, \frac{1}{2^8\pi \gamma_2}\right\}. \end{equation*}
\end{teorema}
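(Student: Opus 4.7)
The plan is to follow the same philosophy as for Theorems \ref{teo:quant_k1} and \ref{teo:quant:k2}, but to exploit the pointwise comparison \eqref{trombetti}, available here because $n=2$ and $f\equiv 1$, in order to upgrade an integral bound into a pointwise bound on $v - u^\sharp$. The strategy has three ingredients: a level-set version of the Talenti--Robin differential comparison, its quantitative reinforcement via the quantitative isoperimetric inequality, and the propagation of the Fraenkel asymmetry of $\Omega$ to the super-level sets of $u$.

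First, I would introduce the distribution functions $\mu(t) = |\{u > t\}|$ and $\phi(t) = |\{v > t\}|$, together with their pseudo-inverses $U(s) = \mu^{-1}(s)$ and $V(s) = \phi^{-1}(s)$. Under the identification $s = \pi |x|^2$, the pointwise inequality \eqref{trombetti} reads $U(s) \leq V(s)$ for every $s \in [0, |\Omega|]$, and it follows from a differential comparison obtained by integrating $-\Delta u = 1$ on $\{u > t\}$, applying the coarea formula and Cauchy--Schwarz, and finally invoking the isoperimetric inequality on the super-level sets. Replacing the isoperimetric inequality with its quantitative form (Theorem \ref{quant_isop_prop}) yields a strict gain, of the form
\[
V'(s) - U'(s) \geq c_0 \, \alpha(\{u > U(s)\})^2 \qquad \text{for a.e. } s,
\]
with $c_0 = c_0(|\Omega|)$ carrying an explicit factor $\gamma_2^{-1}$. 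Integrating this inequality in $s$ produces a corresponding pointwise gain in the comparison $V(s) \geq U(s)$.

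The second and main step is the propagation of asymmetry, borrowed from the same lemma that underpins the proofs of Theorems \ref{teo:quant_k1} and \ref{teo:quant:k2}: on a range of levels $t \in [0, t^\ast]$ the super-level sets $\{u > t\}$ inherit the asymmetry of $\Omega$, in the sense that $\alpha(\{u > t\}) \geq \tfrac{1}{2} \alpha(\Omega)$, with $t^\ast$ (equivalently, the corresponding range of $s$ close to $|\Omega|$) bounded below by a quantity proportional to $\alpha(\Omega)$. Since $(V-U)$ is nonnegative everywhere on $[0,|\Omega|]$, integrating the gain in the previous step over this range yields
\[
\norma{v - u^\sharp}_{L^\infty(\Omega^\sharp)} \geq V(|\Omega|) - U(|\Omega|) \geq c_0 \left(\tfrac{1}{2}\alpha(\Omega)\right)^2 s^\ast \geq C_3 \, \alpha^3(\Omega),
\]
where the two terms in the $\min$ defining $C_3$ correspond to the two regimes in which either the isoperimetric deficit or the length of the range dominates, exactly as in Theorems \ref{teo:quant_k1} and \ref{teo:quant:k2}.

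The main obstacle is the propagation step: one must control $\alpha(\{u > t\})$ from below in terms of $\alpha(\Omega)$ on an interval of levels of length proportional to $\alpha(\Omega)$, which is precisely what upgrades the quadratic bound $\alpha^2(\Omega)$ produced by the quantitative isoperimetric inequality alone to the sharper cubic bound $\alpha^3(\Omega)$. In the present planar, constant right-hand side setting, $v$ is explicit, $v(r) = \tfrac{R^2 - r^2}{4} + \tfrac{R}{2\beta}$ on $B_R$ with $R = \sqrt{|\Omega|/\pi}$, which makes the computation of the explicit constant $C_3$ tractable and, by a direct cancellation with the boundary contribution $R/(2\beta)$, removes any $\beta$-dependence from the final constant, matching the dimensionally simple form stated in the theorem.
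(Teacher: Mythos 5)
Your overall architecture (quantitative isoperimetric inequality on the level sets, propagation of asymmetry via Lemma \ref{lembrasco}, and a conversion of the gain into a pointwise bound) is the right one, but the step on which everything rests is not available in the Robin setting. You claim a pointwise differential comparison for the rearrangements, $V'(s)-U'(s)\ge c_0\,\alpha^2(\{u>U(s)\})$ for a.e.\ $s$, obtained "by integrating $-\Delta u=1$ on $\{u>t\}$, applying the coarea formula and Cauchy--Schwarz". Under Robin conditions this chain produces \eqref{ineq_fundamental}, which contains the extra boundary term $\frac{1}{\beta}\int_{\partial U_t\cap\partial\Omega}u^{-1}\,d\mathcal{H}^{1}$ on the right-hand side. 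That term admits no pointwise-in-$t$ control; it is only controlled \emph{after} multiplying by $t$ and integrating (Lemma \ref{lemma3.3}), which is precisely why even the non-quantitative comparison \eqref{trombetti} is proved in \cite{ANT} by an integrated Gronwall argument and not by a derivative comparison. Indeed the pointwise inequality you want is false in general: for $t<u_m$ one has $\mu'(t)=0$ while $-\phi'(t)$ need not dominate anything, so $V'-U'$ cannot be bounded below pointwise. This is a genuine gap, not a presentational one.

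What the paper does instead is to set $\varepsilon=\norma{v-u^\sharp}_{L^\infty(\Omega^\sharp)}$ at the outset and convert it into the distribution-function estimate $\phi(t)-4\pi\varepsilon\le\mu(t)\le\phi(t)$ (using $-\phi'\le 4\pi$, which is read off from the \emph{equality} \eqref{talentiphi} for the radial solution). Then it multiplies \eqref{talentimu} and \eqref{talentiphi} by $t$, integrates up to $\tau\ge v_m$ so that the boundary contributions cancel via Lemma \ref{lemma3.3}, and obtains the integrated inequality $\frac{4\pi}{\gamma_2}\int_0^\tau t\,\alpha^2(U_t)\,dt\le\tau\bigl(\phi(\tau)-\mu(\tau)\bigr)\le 4\pi\tau\varepsilon$. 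Two further points you gloss over then become essential: (i) the length of the range of levels on which Lemma \ref{lembrasco} applies is \emph{not} freely "proportional to $\alpha(\Omega)$" --- the bound $s_\Omega\ge\frac{\abs{\Omega}\alpha(\Omega)}{16\pi}-\varepsilon$ itself uses the $\varepsilon$-closeness of $\mu$ and $\phi$, so it involves the unknown $\varepsilon$ and forces the case analysis ($\varepsilon$ small versus large compared to $\abs{\Omega}\alpha(\Omega)$) that produces the two terms in the $\min$ defining $C_3$; and (ii) one must separately treat $s_\Omega<v_m$, where the integrated inequality is not applicable and the conclusion follows directly from $4\pi\varepsilon\ge\phi(s_\Omega)-\mu(s_\Omega)\ge\frac{\abs{\Omega}}{4}\alpha(\Omega)$. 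To repair your argument you would need to replace the pointwise differential inequality by this integrated mechanism; as written, the proof does not go through.
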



In all the aforementioned results, we are able to control from above the Fraenkel asymmetry of the set $\Omega$ in terms of the corresponding comparison gap, with different powers appearing. In particular, in Theorem \ref{teo:quant_k1} and Theorem \ref{teo:quant:k2} we obtain the exponent $2$, which is lower than the one appearing in the Dirichlet case. We actually conjecture that the quadratic exponent is the optimal one, as often happens when considering quantitative inequalities of spectral type. On the other hand, in Theorem \ref{quantitativa_puntuale} we obtain the exponent $3$, as in the Dirichlet case.

Although Theorem \ref{quantitativa_puntuale} may look weaker than the previous ones due to the larger exponent, it actually allows us to recover exactly the same quantitative estimate as in the Dirichlet case, since the constant obtained there does not depend on $\beta$. This is in contrast with Theorems \ref{teo:quant_k1} and \ref{teo:quant:k2}, where the constant degenerates and tends to zero as $\beta\to \infty$. Hence, for large $\beta$, Theorem \ref{quantitativa_puntuale} is stronger.

The essential difference with the Dirichlet case \eqref{esti_tot} is that the authors in \cite{ABMP} were able to obtain a more complete result, also in terms of the distance of the functions $u$ and $f$ from their respective symmetrized. In this regard, notice that the stability proof provided in \cite{ABMP}  is strongly based on the quantitative P\'olya-Szeg\H o principle proved in \cite{polyaquantitativa}, which is not valid for general $W^{1,p}(\Omega)$ functions, not vanishing on $\partial\Omega$. 

\vspace{3mm}

The structure of the paper is the following.
In Section \ref{preliminary}, we recall some useful results about rearrangements.
In Section \ref{main_results}, we prove the main results of the paper. In Section \ref{daners}, using our Theorems,  we give an alternative proof of the Bossel-Daners and of the  Saint-Venant inequality and we collect a list of open problems.

\section{Notation and Preliminaries}
\label{preliminary}
Throughout this article, we will denote by $|\Omega|$ the Lebesgue measure of an open and bounded set of $\mathbb{R}^n$, with $n\geq 2$, and by $P(\Omega)$ the perimeter of $\Omega$. Since we are assuming that $\partial \Omega$ is Lipschitz, we have that $P(\Omega)=\mathcal{H}^{n-1}(\partial\Omega)$, where $\mathcal{H}^{n-1}$ denotes the $(n-1)-$dimensional Hausdorff measure.

We recall the classical isoperimetric inequality and we refer the reader, for example, to \cite{burago,chavel, ossy,talenti} and to the original paper by De Giorgi \cite{degiorgi}.
\begin{teorema}[Isoperimetric Inequality]
Let $E\subset \R^n$ be a set of finite perimeter. Then,
\begin{equation}
   \label{isoperimetrica}
     P(E)\ge    n \omega_n^{\frac{1}{n}} \abs{E}^{\frac{n-1}{n}},
\end{equation}
where $\omega_n$ is the measure of the unit ball in $\R^n$.
Equality occurs if and only if $E$ is a ball up to a set of measure zero.
\end{teorema}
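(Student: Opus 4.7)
The plan is to prove the isoperimetric inequality via the Brunn--Minkowski inequality, an approach that yields both the sharp inequality and the characterization of the equality case in a unified way. Recall the Brunn--Minkowski inequality: for any non-empty Borel sets $A,B\subset\R^n$,
$$\abs{A+B}^{1/n}\ge \abs{A}^{1/n}+\abs{B}^{1/n},$$
with equality (when $\abs{A},\abs{B}>0$) if and only if $A$ and $B$ are homothetic convex sets, up to Lebesgue-null modifications.

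Given a set $E$ of finite perimeter, the link with the perimeter is the Minkowski content formula
$$P(E)\ge \liminf_{\varepsilon\to 0^+}\frac{\abs{E+B_\varepsilon}-\abs{E}}{\varepsilon},$$
with equality whenever $\partial E$ is sufficiently regular (for general sets of finite perimeter the inequality suffices for our purpose). Applying Brunn--Minkowski with $B=B_\varepsilon$, so that $\abs{B_\varepsilon}=\omega_n\varepsilon^n$, yields
$$\abs{E+B_\varepsilon}^{1/n}\ge \abs{E}^{1/n}+\omega_n^{1/n}\varepsilon.$$
Raising both sides to the $n$-th power and using the binomial expansion,
$$\abs{E+B_\varepsilon}\ge \abs{E}+n\,\omega_n^{1/n}\abs{E}^{(n-1)/n}\varepsilon+O(\varepsilon^2).$$
Subtracting $\abs{E}$, dividing by $\varepsilon$, and letting $\varepsilon\to 0^+$ gives the desired bound $P(E)\ge n\omega_n^{1/n}\abs{E}^{(n-1)/n}$.

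For the rigidity statement, suppose equality holds in \eqref{isoperimetrica}. Tracing back through the chain of estimates forces equality in Brunn--Minkowski in the limit $\varepsilon\to 0^+$; the equality case of Brunn--Minkowski then forces $E$ to be homothetic to a ball, hence a ball, modulo a set of measure zero. A self-contained alternative, more natural in the BV framework, is to run a Schwarz (or iterated Steiner) symmetrization argument: the volume is preserved, the perimeter does not increase, and a careful analysis of the equality case in the P\'olya--Szeg\H o-type perimeter inequality forces all hyperplane sections of $E$ to be symmetric, yielding that $E$ coincides with a ball up to a null set.

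The main obstacle is the rigidity part: the inequality portion is essentially an infinitesimal consequence of Brunn--Minkowski, but rigidity in the class of sets of finite perimeter requires either the sharp equality case of Brunn--Minkowski or De Giorgi's structure theorem for the reduced boundary together with a symmetrization argument. Since the statement is recalled here as a classical preliminary, I would refer to \cite{degiorgi,burago,chavel,talenti,ossy} for the technical details of the equality case.
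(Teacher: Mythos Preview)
The paper does not actually prove this theorem: it is stated as a classical preliminary with references to \cite{burago,chavel,ossy,talenti} and the original paper of De Giorgi \cite{degiorgi}. So there is no ``paper's proof'' to compare against, and your sketch must stand on its own.

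Your argument has a genuine gap in the very first step after Brunn--Minkowski. You claim
\[
P(E)\ge \liminf_{\varepsilon\to 0^+}\frac{\abs{E+B_\varepsilon}-\abs{E}}{\varepsilon},
\]
but for general sets of finite perimeter the inequality goes the \emph{other} way, when it holds at all. A trivial example in $\R^1$: take $E=[0,1]\cup\{2\}$. Then $P(E)=2$, while $E+B_\varepsilon=(-\varepsilon,1+\varepsilon)\cup(2-\varepsilon,2+\varepsilon)$ has measure $1+4\varepsilon$, so the difference quotient is $4>2=P(E)$. The point is that the Minkowski enlargement is not invariant under modification by null sets, whereas the De~Giorgi perimeter is; and even for ``good'' representatives the outer Minkowski content can exceed the perimeter. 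With the inequality in the wrong direction, the chain of estimates no longer produces a lower bound on $P(E)$.

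The standard repair is to prove the inequality first for smooth (or convex) bounded sets, where the Minkowski content genuinely equals the perimeter, and then pass to general sets of finite perimeter by approximation: choose smooth $E_k$ with $\abs{E_k}\to\abs{E}$ and $P(E_k)\to P(E)$, and use lower semicontinuity. This recovers \eqref{isoperimetrica} cleanly. The rigidity part, as you correctly note, is considerably more delicate in the finite-perimeter class and is precisely the content of De~Giorgi's work; your paragraph on this is honest but does not constitute a proof, so the reference to \cite{degiorgi} is essential there.
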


We also recall the quantitative isoperimetric inequality, proved in \cite{fusco_maggi} (see also \cite{cicaleseleonardi, Fuglede, hall, halleco}).
\begin{teorema}
    \label{quant_isop_prop}
    There exists a constant $\gamma_n$ such that,  for any measurable set $\Omega$ of finite measure
    \begin{equation}\label{quant_isop}
        P(\Omega)\geq n\omega_n^{\frac{1}{n}}\abs{\Omega}^{\frac{n-1}{n}}\left(1+\dfrac{\alpha^2(\Omega)}{\gamma_n}\right),
    \end{equation}
    where $\alpha(\Omega)$ is defined in \eqref{asimm}.
\end{teorema}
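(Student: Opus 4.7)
The plan is to promote the pointwise Alvino-Nitsch-Trombetti comparison \eqref{trombetti} to a quantitative estimate by inserting the quantitative isoperimetric inequality (Theorem \ref{quant_isop_prop}) into the standard level-set analysis and then tracking how the asymmetry of the super-level sets $\{u>t\}$ propagates. Setting $\mu(t)=|\{u>t\}|$ and $\nu(t)=|\{v>t\}|$, the Schwarz rearrangement yields the elementary identity $v(r)-u^\sharp(r)=\nu^{-1}(\pi r^2)-\mu^{-1}(\pi r^2)$, so it suffices to produce a distribution-function gap $\nu(t_\star)-\mu(t_\star)\ge c\,|\Omega|\,\alpha^3(\Omega)$ and then divide by the slope $|\nu'|$, which for $n=2$ and $f\equiv 1$ equals $4\pi$ on its effective support and is independent of $\beta$; this is precisely what allows a $\beta$-independent constant $C_3$ to appear in the statement.

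First I would revisit the derivation of the differential inequality for $\mu$. Integrating $-\Delta u=1$ over $\{u>t\}$, splitting the boundary of this set into its interior part, where Cauchy-Schwarz yields the standard term $P_\Omega(t)^2/(-\mu'(t))$, and its part on $\partial\Omega$, where the Robin condition gives $-\partial_\nu u=\beta u\ge\beta t$, one obtains
\[
\mu(t)\;\ge\;\frac{P_\Omega(t)^2}{-\mu'(t)}+\beta t\,P_e(t),
\]
with $P_\Omega(t)+P_e(t)=P(\{u>t\})$. Replacing the classical isoperimetric inequality by its quantitative version yields $P(\{u>t\})\ge 2\sqrt{\pi\mu(t)}\bigl(1+\gamma_2^{-1}\alpha^2(\{u>t\})\bigr)$, and substitution produces an improved differential inequality of the schematic form
\[
-\mu'(t)\;\le\;-\nu'(t)-c_1\,\alpha^2(\{u>t\})-c_2\,\beta t\,P_e(t),
\]
for explicit $c_1,c_2>0$ depending only on $|\Omega|$ and $\gamma_2$. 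The two regimes in which the isoperimetric surplus or the boundary contribution carries the improvement account for the two constants appearing in the minimum defining $C_3$.

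The crux is the propagation-of-asymmetry step. By the strong maximum principle, $u$ is continuous and bounded away from zero on $\overline\Omega$, and $t\mapsto\{u>t\}$ is an $L^1$-continuous curve of sets starting at $\Omega$. Exploiting the Lipschitz continuity of the Fraenkel asymmetry with respect to the symmetric difference, I would exhibit an interval $[0,t_\star]$ with $t_\star\gtrsim |\Omega|^{1/2}\alpha(\Omega)$ on which $\alpha(\{u>t\})\ge\tfrac12\alpha(\Omega)$. Integrating the improved inequality over $[0,t_\star]$, together with $\nu(0)-\mu(0)=0$, then gives
\[
\nu(t_\star)-\mu(t_\star)\;\ge\;c\,\alpha^2(\Omega)\cdot t_\star\;\ge\;c'\,|\Omega|\,\alpha^3(\Omega),
\]
and dividing by $|\nu'|=4\pi$ closes the argument with the claimed constant.

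The main obstacle I anticipate is establishing the asymmetry-propagation lemma with a $\beta$-independent lower bound on $t_\star$: the naive bound $t_\star\lesssim \min u$ would force a $\beta$-dependent prefactor since $\min u\to 0$ as $\beta\to\infty$, so one must instead work above a level determined purely by the geometry of the radial profile $v$ in the disk and verify that the $L^1$-perturbation control of $\{u>t\}$ still applies in that range. Once this geometric input is in hand, the remaining steps are a routine manipulation of the improved ODE and a calculation of the explicit constants, matching $C_3=|\Omega|\min\{1/(2^7\pi),\,1/(2^8\pi\gamma_2)\}$.
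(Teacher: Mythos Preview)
Your proposal is addressed to the wrong statement. The theorem labeled \texttt{quant\_isop\_prop} is the quantitative isoperimetric inequality of Fusco--Maggi--Pratelli; in the paper it is recalled as a preliminary result with a citation and \emph{no proof is given or claimed}. What you have written is instead a sketch of a proof of Theorem~\ref{quantitativa_puntuale} (the quantitative pointwise comparison $\lVert v-u^\sharp\rVert_{L^\infty}\ge C_3\alpha^3(\Omega)$ for $n=2$, $f\equiv 1$), which \emph{uses} the quantitative isoperimetric inequality as an input but is an entirely different statement. Nothing in your proposal even attempts to bound the isoperimetric deficit $P(\Omega)-n\omega_n^{1/n}|\Omega|^{(n-1)/n}$ from below by the Fraenkel asymmetry.

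If your intention was actually to prove Theorem~\ref{quantitativa_puntuale}, then your outline is in the right spirit and close to the paper's argument, but several steps are either imprecise or not quite correct as stated. The paper does not derive a differential inequality of the form $-\mu'(t)\le -\nu'(t)-c_1\alpha^2(U_t)-c_2\beta t P_e(t)$; rather, it multiplies \eqref{talentimu} and \eqref{talentiphi} by $t$, integrates, and subtracts to obtain the integral estimate $\frac{4\pi}{\gamma_2}\int_0^\tau t\,\alpha^2(U_t)\,dt\le \tau(\phi(\tau)-\mu(\tau))$ for $\tau\ge v_m$. The $\beta$-independent lower bound on the length of the ``good'' interval is not obtained by an $L^1$-continuity argument for $t\mapsto U_t$ but by the explicit observation (from the ODE for $\phi$ with $f\equiv1$, $n=2$) that $\phi(t)-\phi(t+\varepsilon)\le 4\pi\varepsilon$, which combined with \eqref{muustar} gives $s_\Omega\ge \frac{|\Omega|\alpha(\Omega)}{16\pi}-\varepsilon$ directly. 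Finally, the case distinction in the paper is between $s_\Omega\ge v_m$ and $s_\Omega<v_m$, the latter being handled trivially since then $\phi(s_\Omega)=|\Omega|$; your sketch does not account for this split and your claimed differential inequality would not by itself yield the result when $s_\Omega<v_m$.
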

We observe that the constant $\gamma_n$ is explicitly computed in \cite{figalli_maggi}.

To prove our main results, it is useful to estimate the asymmetry of $U_t$ in terms of the asymmetry of $\Omega$, as in the following lemma (we refer to \cite[Lemma 2.8]{brasco}). 
\begin{lemma}\label{lembrasco}
    Let $\Omega\subset\mathbb{R}^n$ be an open set with finite measure, and let $U\subset\Omega$, $\abs{U}>0$ be such that 
    \begin{equation}\label{cond}
        \dfrac{\abs{\Omega\setminus U}}{\abs{\Omega}}\leq \dfrac{1}{4}\alpha(\Omega).
        \end{equation}
                Then, we have
                \begin{equation*}
                    \alpha(U)\geq \dfrac{1}{2}\alpha(\Omega).
                \end{equation*}
\end{lemma}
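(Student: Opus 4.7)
The plan is to use the definition of the Fraenkel asymmetry of $U$ via an optimal ball and to transport the resulting estimate back to $\Omega$ through a triangle-inequality argument that exploits the smallness of $|\Omega \setminus U|$.

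More precisely, I would first pick a ball $B \subset \R^n$ with $|B| = |U|$ realizing the minimum in the definition of $\alpha(U)$, i.e.\ such that $|U\Delta B| = |U|\,\alpha(U)$. Let $B'$ be the ball concentric with $B$ having $|B'| = |\Omega|$; note that $B \subseteq B'$, so $|B'\setminus B| = |\Omega| - |U| = |\Omega \setminus U|$. Since $U \subseteq \Omega$, a short set-theoretic computation yields
\begin{equation*}
    |\Omega \Delta B| \le |U\Delta B| + |\Omega \setminus U|,
\end{equation*}
because $B\setminus\Omega \subseteq B\setminus U$ and $\Omega\setminus B \subseteq (U\setminus B)\cup(\Omega\setminus U)$. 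Combining this with $|B\Delta B'| = |B'\setminus B| = |\Omega\setminus U|$ gives the key inequality
\begin{equation*}
    |\Omega \Delta B'| \le |\Omega \Delta B| + |B\Delta B'| \le |U\Delta B| + 2|\Omega\setminus U|.
\end{equation*}

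Since $|B'| = |\Omega|$, the ball $B'$ is admissible in the minimization defining $\alpha(\Omega)$, so $|\Omega|\,\alpha(\Omega) \le |\Omega\Delta B'|$. Plugging in the previous bound and the assumption \eqref{cond} in the form $|\Omega\setminus U|\le \tfrac{1}{4}|\Omega|\,\alpha(\Omega)$ I obtain
\begin{equation*}
    |\Omega|\,\alpha(\Omega) \le |U|\,\alpha(U) + 2|\Omega\setminus U| \le |U|\,\alpha(U) + \tfrac{1}{2}|\Omega|\,\alpha(\Omega),
\end{equation*}
hence $|U|\,\alpha(U) \ge \tfrac{1}{2}|\Omega|\,\alpha(\Omega)$. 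Since $|U|\le|\Omega|$, the claim $\alpha(U)\ge \tfrac{1}{2}\alpha(\Omega)$ follows.

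There is no real obstacle here: the argument is a careful but routine triangle inequality. The only delicate point is keeping track of which triangle inequality to apply and making sure to exploit the concentricity of $B$ and $B'$ so that $|B\Delta B'|$ reduces exactly to the measure deficit $|\Omega\setminus U|$; this is what allows the factor $2$ to appear and match the constant $\tfrac{1}{4}$ in the hypothesis.
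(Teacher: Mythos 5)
Your proof is correct: the triangle inequality for the symmetric difference, the set-theoretic inclusion giving $|\Omega\Delta B|\le|U\Delta B|+|\Omega\setminus U|$, the identity $|B\Delta B'|=|\Omega\setminus U|$ for the concentric enlarged ball, and the final bookkeeping with the hypothesis \eqref{cond} all check out. The paper does not actually prove this lemma (it is quoted from the cited reference \cite{brasco}), and your argument is essentially the standard one given there, so there is nothing to add beyond the minor remark that one should note the minimum in the definition of $\alpha(U)$ is attained (the map $x\mapsto|U\Delta B_r(x)|$ is continuous and tends to $2|U|$ at infinity), or else run the same argument with a near-optimal ball and pass to the limit.
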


For the following theorem, we refer to \cite{ambrosio2000functions}.
 \begin{teorema}[Coarea formula]
 Let $\Omega \subset \mathbb{R}^n$ be an open set with Lipschitz boundary. Let $f\in W^{1,1}_{\text{loc}}(\Omega)$ and let $u:\Omega\to\R$ be a measurable function. Then,
 \begin{equation}
   \label{coarea}
   {\displaystyle \int _{\Omega}u(x)|\nabla f(x)|dx=\int _{\mathbb {R} }dt\int_{\Omega\cap f^{-1}(t)}u(y)\, d\mathcal {H}^{n-1}}.
 \end{equation}
 \end{teorema}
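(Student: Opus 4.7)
The plan is to make the proof of the pointwise comparison \eqref{trombetti} from \cite{ANT} quantitative by substituting the standard isoperimetric inequality with the sharp stable version \eqref{quant_isop} and then tracking how the resulting asymmetry gain on the super-level sets $U_t=\{u>t\}$ transfers into a pointwise gap between $v$ and $u^\sharp$.

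First, I would set up the Talenti-type machinery in the ANT form. Let $\mu(t)=|U_t|$ be the distribution function of $u$ and let $u_m=\min_{\partial\Omega}u$. For $t\geq u_m$, integrating $-\Delta u=1$ on $U_t$ and using the Robin condition gives
\begin{equation*}
\int_{\{u=t\}}|\nabla u|\,d\mathcal{H}^{1}=\mu(t)-\beta\int_{U_t\cap\partial\Omega}u\,d\mathcal{H}^{1}.
\end{equation*}
Combining this with the coarea identity $-\mu'(t)=\int_{\{u=t\}}|\nabla u|^{-1}\,d\mathcal{H}^{1}$, the Cauchy--Schwarz inequality on the level set, and the two-dimensional isoperimetric inequality applied to $U_t$, one recovers a differential inequality for $\mu$ that is saturated by the distribution function $\mu_v$ associated with $v$. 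Integrating the resulting comparison between $\mu$ and $\mu_v$ yields $u^\sharp\leq v$ pointwise.

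Second, I would replace the plain isoperimetric estimate $P(U_t)\geq 2\sqrt{\pi\mu(t)}$ by the quantitative form \eqref{quant_isop}, producing an additive gain of order $\alpha^{2}(U_t)/\gamma_{2}$ in the differential inequality. To transform this gain on $\alpha(U_t)$ into a statement about $\alpha(\Omega)$, I would invoke Lemma \ref{lembrasco}: choosing a threshold $t^{\star}>u_m$ so that $|\Omega\setminus U_{t^{\star}}|\leq |\Omega|\alpha(\Omega)/4$, the lemma guarantees $\alpha(U_t)\geq\alpha(\Omega)/2$ for every $t\in[u_m,t^{\star}]$. The measure of this range of levels is exactly of order $\alpha(\Omega)|\Omega|$, which is the crucial geometric input.

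Third, I would integrate the improved differential inequality across this interval. The integrand carries an $\alpha^{2}(\Omega)$-sized surplus coming from the quantitative isoperimetric term, while the length of integration is $\alpha(\Omega)|\Omega|$-sized; together they produce a pointwise gap $v(x^{\star})-u^\sharp(x^{\star})\gtrsim\alpha^{3}(\Omega)|\Omega|$ at the radius $r^{\star}$ determined by $\pi(r^{\star})^{2}=\mu(t^{\star})$. Taking the infimum on $x^\star$ inside the admissible range and optimizing the constants yields the explicit $C_{3}=|\Omega|\min\{1/(2^{7}\pi),1/(2^{8}\pi\gamma_{2})\}$: the first term arises from the plain part of the comparison, while the second comes from the isoperimetric deficit, exactly as in the bounds of Theorems \ref{teo:quant_k1} and \ref{teo:quant:k2}.

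The main obstacle is the bookkeeping: (i) isolating cleanly the differential inequality in the Robin case, where the boundary term $\beta\int_{U_t\cap\partial\Omega}u$ must be handled before Cauchy--Schwarz is applied; (ii) verifying that a single choice of $t^{\star}$ satisfies both the hypothesis of Lemma \ref{lembrasco} and allows integration with the explicit constants in the statement; and (iii) ensuring that, in contrast to Theorems \ref{teo:quant_k1} and \ref{teo:quant:k2}, the structural identities used in the $n=2$, $f\equiv1$ setting make the final constant independent of $\beta$ — so that \eqref{stima:quant:punt} remains informative in the Dirichlet limit $\beta\to\infty$.
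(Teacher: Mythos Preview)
Your proposal does not address the stated theorem. The statement you were asked to prove is the classical Coarea formula, which the paper does not prove at all but merely quotes from \cite{ambrosio2000functions} as a preliminary tool. What you have written is instead a proof sketch for Theorem~\ref{quantitativa_puntuale} (the pointwise quantitative comparison in the case $n=2$, $f\equiv 1$). These are entirely different results: the Coarea formula is a general measure-theoretic identity relating a volume integral of $u|\nabla f|$ to the integral over level sets of $f$, and has nothing to do with Robin boundary conditions, Fraenkel asymmetry, or the comparison between $u^\sharp$ and $v$.

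If your intent was indeed to prove Theorem~\ref{quantitativa_puntuale}, then your outline is broadly in the right spirit but diverges from the paper's actual argument in a key structural way. The paper does \emph{not} integrate the improved differential inequality over the interval $[u_m,t^\star]$ to produce a pointwise gap directly. Instead it first sets $\varepsilon=\|v-u^\sharp\|_{L^\infty}$, deduces from this the two-sided bound $\phi(t)-4\pi\varepsilon\le\mu(t)\le\phi(t)$, and then uses the absolute continuity of $\phi$ (with the explicit ODE $-\phi'=4\pi-\tfrac{1}{\beta}\int_{\partial V_t^{ext}}v^{-1}$) to control $s_\Omega$ from below by $\tfrac{|\Omega|\alpha(\Omega)}{16\pi}-\varepsilon$. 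The quantitative isoperimetric gain is then inserted into a $t$-weighted integrated inequality (multiply \eqref{talentimu} and \eqref{talentiphi} by $t$, subtract, integrate by parts) to get $\tfrac{4\pi}{\gamma_2}\int_0^\tau t\,\alpha^2(U_t)\,dt\le\tau(\phi(\tau)-\mu(\tau))\le 4\pi\tau\varepsilon$, and a case split on whether $s_\Omega\ge v_m$ closes the argument. Your step ``the length of integration is $\alpha(\Omega)|\Omega|$-sized'' is the right heuristic, but the paper obtains it via the lower bound on $s_\Omega$ coming from the $\varepsilon$-bound on $\phi-\mu$, not by choosing $t^\star$ a priori; this is also what makes the $\beta$-independence transparent, since the boundary term in $-\phi'$ has a favorable sign and simply gets dropped.
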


 \subsection{Property of the rearrangement}
Let us recall some basic notions about rearrangements. For a general overview, see, for instance, \cite{kes}.

 \begin{definizione}\label{distribution:function}
	Let $u: \Omega \to \R$ be a measurable function, the \emph{distribution function} of $u$ is the function $\mu : [0,+\infty[\, \to [0, +\infty[$ defined as the measure of the superlevel sets of $\abs{u}$, i.e.,
	$$
	\mu(t)= \abs{\Set{x \in \Omega \, :\,  \abs{u(x)} > t}}.
	$$
\end{definizione}
\begin{definizione}\label{decreasing:rear}
	Let $u: \Omega \to \R$ be a measurable function, the \emph{decreasing rearrangement} of $u$, denoted by $u^\ast(\cdot)$, is defined as
$$u^*(s)=\inf\{t\geq 0:\mu(t)<s\}.$$
\end{definizione}
	\begin{oss}\label{inverse}
	Let us notice that the function $\mu(\cdot)$ is decreasing and right continuous, and the function $u^\ast(\cdot)$ is its generalized inverse.
    From Definitions \ref{distribution:function} and \ref{decreasing:rear}, one can prove that
	 $$u^\ast (\mu(t)) \leq t, \quad \forall t\ge 0,$$ 
  $$\mu (u^\ast(s)) \leq s \quad \forall s \ge 0.$$ 

	\end{oss}

	\begin{definizione}\label{rear}
	 The \emph{Schwartz rearrangement} of $u$ is the function $u^\sharp $ whose superlevel sets are concentric balls with the same measure as the superlevel sets of $u$. 
	\end{definizione}

		We have the following relation between $u^\sharp$ and $u^*$:
	$$u^\sharp (x)= u^*(\omega_n\abs{x}^n),$$
 and one can easily check that the functions $u$, $u^*$, and $u^\sharp$ are equi-distributed, i.e., they have the same distribution function, and there holds
\begin{equation}\label{eq_norm_sym}
\displaystyle{\norma{u}_{L^p(\Omega)}=\norma{u^*}_{L^p(0, \abs{\Omega})}=\lVert{u^\sharp}\rVert_{L^p(\Omega^\sharp)}}, \quad \text{for all } p\ge1.    
\end{equation}

We also recall the Hardy-Littlewood inequality, an important property of the decreasing rearrangement (see \cite{hardy_classico})
\begin{equation*}
 \int_{\Omega} \abs{h(x)g(x)} \, dx \le \int_{0}^{\abs{\Omega}} h^*(s) g^*(s) \, ds,
\end{equation*}
thus, choosing $h(\cdot)=\chi_{\left\lbrace\abs{u}>t\right\rbrace}$, one has
\begin{equation*}
\int_{\abs{u}>t} \abs{g(x)} \, dx \le \int_{0}^{\mu(t)} g^*(s) \, ds.
\end{equation*}
We now introduce the Lorentz spaces (see \cite{T3} for more details on this topic). 
\begin{definizione}\label{lorentz}
Let $0<p<+\infty$ and $0<q\le +\infty$. The Lorentz space $L^{p,q}(\Omega)$ is the space of those functions such that the quantity:
\begin{equation*}
       \norma{u}_{L^{p,q}}=\begin{cases}
   	 \displaystyle{ \left( \int_{0}^{\infty}  t^{q} \mu(t)^{\frac{q}{p}}\, \frac{dt}{t}\right)^{\frac{1}{q}}} & 0<q<\infty\\[2ex]
	 \displaystyle{\sup_{t>0} \, (t^p \mu(t))} & q=\infty
	\end{cases}
\end{equation*}
is finite.

\end{definizione}
	
 Let us observe that for $p=q$ the Lorentz space coincides with the $L^p$ space, as a consequence of the \emph{Cavalieri Principle}

\[
\int_\Omega \abs{u}^p =p \int_0^{+\infty} t^{p-1} \mu(t) \, dt.
\]

\subsection{The Poisson  problem with Robin boundary conditions}
Let us now recall some properties of the problems we are studying.

The solutions $u$ to problem \eqref{main_problem} and $v$ to problem \eqref{sym_problem} are both superharmonic and, as a consequence of the strong maximum principle, they achieve their minima on the boundary.
If we set 
$$u_m=\min_\Omega u, \quad v_m=\min_{\Omega^\sharp} v$$ 
the positivity of $\beta$ and the Robin boundary conditions lead to $u_m \geq 0$ and $v_m \geq 0$. Hence, $u$ and $v$ are strictly positive in the interior of $\Omega$.
Moreover, we can observe that 
\begin{equation}
		\label{minima_eq}
		u_m = \min_\Omega u \leq  \min_{\Omega^\sharp} v= v_m,
	\end{equation}
since, as a consequence of the weak formulation of \eqref{main_problem} and \eqref{sym_problem}, it holds
	\begin{equation*}
		\begin{split}
			 v_m  \text{P}(\Omega^\sharp) &= \int_{\partial \Omega^\sharp} v(x) \, d\mathcal{H}^{n-1}= \frac{1}{\beta}\int_{\Omega^\sharp} f^\sharp \, dx=\frac{1}{\beta} \int_{\Omega} f \, dx \\
			& = \int_{\partial \Omega} u(x) \, d\mathcal{H}^{n-1} \\
			&\geq u_m  \text{P}(\Omega)  \geq { u}_m \text{P}(\Omega^\sharp), 
		\end{split}
	\end{equation*}
	and it leads to
	\begin{equation}
		\label{mf}
		\mu (t) \leq \phi (t) = \abs{\Omega}, \quad \forall t \leq v_m.
	\end{equation}

    Moreover, equality holds in \eqref{minima_eq} if and only if $\Omega$ is a ball, as a consequence of the isoperimetric inequality.

\vspace{5mm}

Now, for $t\geq 0$, we introduce the following notations:
$$u_M=\sup_{\Omega}u, \quad U_t=\left\lbrace x\in \Omega : u(x)>t\right\rbrace, \quad \partial U_t^{int}=\partial U_t \cap \Omega, \quad \partial U_t^{ext}=\partial U_t \cap \partial\Omega, \quad \mu(t)=\abs{U_t}$$
and
$$v_M=\sup_\Omega v, \quad V_t=\left\lbrace x\in \Omega^\sharp : v(x)> t\right\rbrace, \quad \partial V_t^{int}=\partial V_t \cap \Omega, \quad \partial V_t^{ext}=\partial V_t \cap \partial\Omega, \quad \phi(t)=\abs{V_t}.$$

Because of the invariance of the Laplacian and of the Schwarz rearrangement of $f$ by rotation, the solution $v$ to \eqref{sym_problem} is radial and, consequently, the superlevel sets $V_t$ are balls.

\vspace{3mm}
Now, we recall some technical Lemmas, proved in \cite{ANT}, that we need in what follows. We recall the proof of Lemma \ref{key} for the reader's convenience, while we omit the proof of Lemma \ref{lemma3.3} and Lemma \ref{lem_Gronwall}. 

\begin{lemma}\label{key}
Let $u$ and $v$ be the solution to \eqref{main_problem} and \eqref{sym_problem}, then, for a.e. $t>0$, it holds
\begin{equation}\label{eq_fundamental}n^2\omega_n^{\frac{2}{n}} \phi(t)^\frac{2n-2}{n}=\left(-\phi'(t)+\dfrac{1}{\beta} \int_{\partial V_t\cap \partial\Omega^\sharp} \dfrac{1}{v} \>d\mathcal{H}^{n-1} \right)\int_0^{\phi(t)}f^*(s)ds,
\end{equation}
and for almost all $t>0$ it holds
\begin{equation}\label{ineq_fundamental_senza_alpha}
n^2\omega_n^{\frac{2}{n}} \mu(t)^\frac{2n-2}{n}\le\left(-\mu'(t)+\dfrac{1}{\beta} \int_{\partial U_t\cap \partial\Omega} \dfrac{1}{u} d\mathcal{H}^{n-1}\right)\int_0^{\mu(t)}f^*(s)ds,
\end{equation}
and
\begin{equation}\label{ineq_fundamental}
n^2\omega_n^{\frac{2}{n}} \mu(t)^\frac{2n-2}{n}\left(1+\frac{\alpha^2(U_t)}{\gamma_n}\right)\le\left(-\mu'(t)+\dfrac{1}{\beta} \int_{\partial U_t\cap \partial\Omega} \dfrac{1}{u} d\mathcal{H}^{n-1}\right)\int_0^{\mu(t)}f^*(s)ds.
\end{equation}
\end{lemma}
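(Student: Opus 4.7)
The plan is to mimic the classical Talenti argument on the superlevel sets $U_t = \{u > t\}$, using three ingredients: the divergence theorem combined with the Robin boundary condition, a two-piece weighted Cauchy--Schwarz inequality on $\partial U_t$, and the (quantitative) isoperimetric inequality. Radial symmetry of $v$ on $\Omega^\sharp$ will automatically turn every inequality into an equality, producing \eqref{eq_fundamental}.

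First I would integrate $-\Delta u = f$ on $U_t$ and apply the divergence theorem, splitting $\partial U_t$ into the interior part $\partial U_t^{int}$ (where $\partial u/\partial\nu = -|\nabla u|$) and the exterior part $\partial U_t^{ext} \subset \partial \Omega$ (where the Robin condition gives $\partial u/\partial \nu = -\beta u$). This produces
$$\int_{U_t} f\,dx \;=\; \int_{\partial U_t^{int}} |\nabla u|\,d\mathcal{H}^{n-1} + \beta \int_{\partial U_t^{ext}} u\,d\mathcal{H}^{n-1},$$
which the Hardy--Littlewood inequality bounds from above by $\int_0^{\mu(t)} f^*(s)\,ds$.

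Next I would apply Cauchy--Schwarz on the whole boundary $\partial U_t$, using the weight pairs $(|\nabla u|,\,1/|\nabla u|)$ on the interior piece and $(\beta u,\,1/(\beta u))$ on the exterior piece. Since the pointwise product of each pair is $1$, the left-hand side equals $P(U_t)^2$, while the right-hand side factors as the product of the quantity in the previous display with
$$\int_{\partial U_t^{int}} \frac{d\mathcal{H}^{n-1}}{|\nabla u|} + \frac{1}{\beta}\int_{\partial U_t^{ext}} \frac{d\mathcal{H}^{n-1}}{u} \;=\; -\mu'(t) + \frac{1}{\beta}\int_{\partial U_t\cap\partial\Omega}\frac{1}{u}\,d\mathcal{H}^{n-1},$$
where the coarea formula \eqref{coarea} identifies the first summand with $-\mu'(t)$ for almost every $t$. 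Combining this with the isoperimetric inequality $P(U_t)\geq n\omega_n^{1/n}\mu(t)^{(n-1)/n}$ yields \eqref{ineq_fundamental_senza_alpha}; using instead the quantitative version provided by Theorem \ref{quant_isop_prop} and then discarding one of the two factors $(1+\alpha^2(U_t)/\gamma_n)$ obtained after squaring produces \eqref{ineq_fundamental}.

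For the identity \eqref{eq_fundamental}, I would observe that on $\Omega^\sharp$ each superlevel set $V_t$ is a concentric ball, so $|\nabla v|$ is constant on $\partial V_t^{int}$ and $v$ is constant on $\partial V_t^{ext}$; the Cauchy--Schwarz step therefore becomes an equality, the isoperimetric inequality is an equality on balls, and Hardy--Littlewood is exact since $f^\sharp$ integrated on $V_t$ gives precisely $\int_0^{\phi(t)} f^*$ by equi-distribution. The whole chain collapses to the announced identity. The single genuinely delicate step is the coarea identification $-\mu'(t) = \int_{\partial U_t^{int}} 1/|\nabla u|\,d\mathcal{H}^{n-1}$, which needs justification when $\nabla u$ may vanish on a set of positive measure; this is a standard BV/Sard's-theorem argument and holds for a.e.\ $t$.
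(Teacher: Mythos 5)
Your proposal is correct and follows essentially the same route as the paper: the flux identity on $\partial U_t$, the two-piece Cauchy--Schwarz estimate with weights $(|\nabla u|,|\nabla u|^{-1})$ and $(\beta u,(\beta u)^{-1})$, Hardy--Littlewood, and the (quantitative) isoperimetric inequality, with the whole chain collapsing to an equality for the radial solution $v$. The only differences are cosmetic: the paper derives the flux identity rigorously by testing the weak formulation with the truncation $\varphi_h$ and letting $h\to 0$ rather than applying the divergence theorem on $U_t$ directly, and it only needs the one-sided coarea inequality $\int_{\partial U_t^{int}}|\nabla u|^{-1}\,d\mathcal{H}^{n-1}\le -\mu'(t)$ (valid for a.e.\ $t$), not the equality you single out as the delicate step.
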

\vspace{2mm}

\begin{proof}

Let $t>0$ and  $h>0$, in the weak formulation of \eqref{main_problem}, we choose the following test function 
\begin{equation*}
\varphi_h(x)= \left\{
\begin{array}{ll}
0 & \mbox{if $0<u<t$}\\\\
h & \mbox{if $u> t+h$} \\\\
u-t  &\mbox{if $t<u<t+h$}.
\end{array}
\right.
\end{equation*}

Then,

\begin{equation}
\begin{aligned}
 \int_{U_t \setminus  U_{t+h}} |\nabla u|^2 \, dx + \beta h \int_{\partial U_{t+h}^{ext}} u  \,d\mathcal{H}^{n-1}+ & \beta \int_{\partial U_{t}^{ext} \setminus \partial U_{t+h}^{ext}} u (u-t) \,d\mathcal{H}^{n-1}=  \\
& \int_{U_t \setminus U_{t+h}} f (u-t) \, dx + h \int_{U_{t+h} } f \, dx,
\end{aligned}
\end{equation}
so we can divide by $h$, use the Coarea formula \eqref{coarea} and let $h$ go to $0$, obtaining for a.e. $t>0$

\begin{equation}\label{fung}
\int_{\partial U_t} g(x) \,d\mathcal{H}^{n-1} = \int_{ \partial U_t^{int}} |\nabla u| \,d\mathcal{H}^{n-1} + \beta \int_{ \partial U_t^{ext}} u\, d\mathcal{H}^{n-1} = \int_{U_t} f\, dx
\end{equation}

where
	\begin{equation*}%
	g(x)=\begin{cases}
	    	\abs{\nabla u }& \text{ if }x \in \partial U_t^{ int },\\
	\beta u & \text{ if }x \in \partial U_t^{ ext }.
	\end{cases}
	\end{equation*}
Now combining the H\"older inequality, \eqref{fung} and the Hardy-Littlewood inequality, we have  for a.e. $t \in [0, u_M)$ 
\begin{equation*}
\begin{aligned}
 P^2(u>t)  &\leq  \left(\int_{\partial U_t} g(x)d\mathcal{H}^{n-1}  \right) \left(\int_{\partial U_t}  g(x)^{-1}d\mathcal{H}^{n-1}  \right)\\
&=\left( \int_{\partial U_t} g(x)d\mathcal{H}^{n-1}  \right)\left(\int_{ \partial U_t^{ int } } |\nabla u |^{-1}d\mathcal{H}^{n-1} + \int_{ \partial U_t^{ ext }}   (\beta u )^{-1}d\mathcal{H}^{n-1} \right)\\ &\leq
\displaystyle\int_0^{\mu(t)}f^*(s)ds \left( -\mu'(t) + \dfrac{1}{\beta} \int_{ \partial U_t^{ ext }} \dfrac{1}{u} \>d\mathcal{H}^{n-1} \right).
\end{aligned}
\end{equation*}
The isoperimetric inequality gives us \eqref{ineq_fundamental_senza_alpha}, and the quantitative isoperimetric inequality gives \eqref{ineq_fundamental}.
If $v$ solves Problem \eqref{sym_problem}, all the previous inequalities hold as equalities, hence \eqref{eq_fundamental} follows.
\end{proof}

\begin{oss}
Let us observe that if we integrate \eqref{eq_fundamental}, one can recover the explicit expression of the function $v$, solution to \eqref{sym_problem}

\begin{equation}\label{v_explicit}
    v(x)=\int_{\omega_n \abs{x}^n}^{\abs{\Omega}} \dfrac{1}{n^2\omega_n^{\frac{2}{n}} t^{2-\frac{2}{n}}}\int_0^t f^*(r)\;dr\, dt+\frac{\abs{\Omega}^\frac{1}{n}}{\beta n\omega_n^{\frac{1}{n}}}\dashint_0^{\abs{\Omega}}f^*(s)\, ds,
\end{equation}

and in particular
$$v_m=\min_{\Omega^\sharp}v=\frac{\abs{\Omega}^\frac{1}{n}}{\beta n\omega_n^{\frac{1}{n}}}\dashint_0^{\abs{\Omega}}f^*(s)\, ds.$$

\end{oss}
\begin{lemma}
	\label{lemma3.3}
	For all $\tau \geq v_m $,  we have
	\begin{equation}
	\label{intmu}
	\int_0^\tau t \left(\int_{\partial U_t^{ ext } } \frac{1}{ u(x) } \, d \mathcal{H}^{n-1}\right) \, dt \leq \frac{1}{2\beta} \int_0^{\abs{\Omega}} f^\ast(s) \,ds.
	\end{equation}
	Moreover,
	\begin{equation}
	\label{intfi}
	\int_0^\tau t \left(\int_{\partial V_t \cap \partial\Omega^\sharp } \frac{1}{ v(x) } \, d \mathcal{H}^{n-1}\right) \, dt =  \frac{1}{2\beta} \int_0^{\abs{\Omega}} f^\ast(s) \,ds.
	\end{equation}
\end{lemma}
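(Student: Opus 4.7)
The strategy is to apply Fubini's theorem to the left-hand side, converting the double integral into a boundary integral over $\partial\Omega$, and then control it via the weak formulation of \eqref{main_problem} with the constant test function $\varphi\equiv 1$. Specifically, since up to an $\mathcal{H}^{n-1}$-negligible set one has $\partial U_t^{ext}=\{x\in\partial\Omega:u(x)\geq t\}$, swapping the order of integration gives
\begin{equation*}
\int_0^\tau t\int_{\partial U_t^{ext}}\frac{1}{u(x)}\,d\mathcal{H}^{n-1}(x)\,dt=\int_{\partial\Omega}\frac{1}{u(x)}\int_0^{\min(\tau,u(x))}t\,dt\,d\mathcal{H}^{n-1}(x)=\frac{1}{2}\int_{\partial\Omega}\frac{\min(\tau,u(x))^2}{u(x)}\,d\mathcal{H}^{n-1}(x).
\end{equation*}

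Next, the elementary pointwise bound $\min(\tau,s)^2/s\leq s$, valid for all $s,\tau>0$ (with equality precisely when $s\leq\tau$), yields the estimate $\tfrac{1}{2}\int_{\partial\Omega}u\,d\mathcal{H}^{n-1}$ for the right-hand side. To evaluate this boundary integral, I would test \eqref{main_problem} with $\varphi\equiv 1$: the Laplacian term vanishes and the Robin condition immediately produces
\begin{equation*}
\beta\int_{\partial\Omega}u\,d\mathcal{H}^{n-1}=\int_\Omega f\,dx=\int_0^{|\Omega|}f^*(s)\,ds,
\end{equation*}
where the last equality is the rearrangement invariance of the integral. Combining this with the previous displays gives exactly \eqref{intmu}.

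For the identity \eqref{intfi}, the same Fubini computation applies verbatim to $v$ on $\Omega^\sharp$, but now $v$ is radial and hence constant equal to $v_m$ on $\partial\Omega^\sharp$. Since the hypothesis $\tau\geq v_m$ forces $\min(\tau,v)\equiv v$ on $\partial\Omega^\sharp$, the pointwise bound from the inequality case becomes an equality, and an analogous constant-test-function computation for the symmetrized problem \eqref{sym_problem}, together with $\int_{\Omega^\sharp}f^\sharp\,dx=\int_\Omega f\,dx$, delivers the stated equality. I do not anticipate any serious obstacle here: the proof is essentially an exchange of integration coupled with a one-line weak-formulation identity, and the hypothesis $\tau\geq v_m$ enters solely to promote the pointwise inequality to equality in the radial, symmetrized case.
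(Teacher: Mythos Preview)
Your proposal is correct and follows essentially the same Fubini-based argument as in the original reference \cite{ANT} (the present paper omits the proof and simply cites that work). The key steps---identifying $\partial U_t^{ext}$ with $\{x\in\partial\Omega:u(x)>t\}$ up to a null set, swapping the order of integration, applying the elementary bound $\min(\tau,s)^2/s\le s$, and then reading off $\beta\int_{\partial\Omega}u=\int_\Omega f$ from the weak formulation with $\varphi\equiv 1$---are exactly those used in \cite{ANT}, and your observation that equality in the symmetrized case follows from $v\equiv v_m\le\tau$ on $\partial\Omega^\sharp$ is the correct way to obtain \eqref{intfi}.
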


\begin{lemma}[Gronwall]\label{lem_Gronwall}
Let $\xi(\tau)$ be a continuously differentiable function,  and let $C$ be a non-negative constant such that the following differential inequality holds 
$$
	\tau \xi' (\tau ) \leq  \xi(\tau) + C \quad \forall \tau \geq \tau_0 >0.
	$$
Then, we have

\begin{equation*}\label{gron1}
\begin{aligned}
    &\xi(\tau) \leq \left(\xi(\tau_0) + C\right) \left( \frac{\tau}{\tau_0}\right) - C,  &\forall \tau \geq \tau_0,\\
&\xi'(\tau) \leq \left( \frac{\xi(\tau_0 )+ C}{\tau_0}\right) , &\forall \tau \geq \tau_0.
\end{aligned}
\end{equation*}
\end{lemma}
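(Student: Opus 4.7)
The plan is to rewrite the differential inequality in a form whose left-hand side is an exact derivative, so that a direct integration yields the claimed bound on $\xi$. The natural move is to divide by $\tau^2$ and recognize the derivative of $\xi(\tau)/\tau$: indeed, since $\tau\geq\tau_0>0$ one has
$$
\left(\frac{\xi(\tau)}{\tau}\right)' \;=\; \frac{\tau\xi'(\tau)-\xi(\tau)}{\tau^2}\;\leq\;\frac{C}{\tau^2},
$$
where the inequality uses the hypothesis $\tau\xi'(\tau)\leq \xi(\tau)+C$.

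Next, I would integrate this pointwise inequality on $[\tau_0,\tau]$. The antiderivative on the right is $-C/\tau$, so the fundamental theorem of calculus (applicable because $\xi$ is $C^1$, hence $\xi/\tau$ is $C^1$ on $[\tau_0,\tau]$) gives
$$
\frac{\xi(\tau)}{\tau}-\frac{\xi(\tau_0)}{\tau_0}\;\leq\; C\left(\frac{1}{\tau_0}-\frac{1}{\tau}\right).
$$
Multiplying through by $\tau>0$ and rearranging yields exactly
$$
\xi(\tau)\;\leq\;\bigl(\xi(\tau_0)+C\bigr)\frac{\tau}{\tau_0}-C,
$$
which is the first asserted bound.

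For the bound on $\xi'$, I would plug this estimate for $\xi(\tau)$ back into the differential inequality $\tau\xi'(\tau)\leq \xi(\tau)+C$ to obtain
$$
\tau\xi'(\tau)\;\leq\;\bigl(\xi(\tau_0)+C\bigr)\frac{\tau}{\tau_0}-C+C\;=\;\bigl(\xi(\tau_0)+C\bigr)\frac{\tau}{\tau_0},
$$
and dividing both sides by $\tau>0$ yields $\xi'(\tau)\leq (\xi(\tau_0)+C)/\tau_0$ for all $\tau\geq\tau_0$.

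There is essentially no obstacle here: the whole argument is a textbook integrating-factor computation, and the only thing to be mildly careful about is that $\tau_0>0$ so that dividing by $\tau$ and by $\tau^2$ is legitimate throughout $[\tau_0,\infty)$, and that $\xi\in C^1$ guarantees the pointwise inequality on the derivative can be integrated directly (no need to invoke a Lebesgue version of the fundamental theorem of calculus).
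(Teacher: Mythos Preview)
Your argument is correct and is the standard integrating-factor proof of this Gronwall-type estimate. The paper itself omits the proof of this lemma (it is stated as a technical tool from \cite{ANT}), so there is nothing to compare against; your write-up would serve perfectly well as the missing proof.
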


We conclude the preliminary section by recalling the rigidity result proved in \cite{lions_remark} for the Dirichlet boundary conditions.
\begin{teorema}
    \label{teo:alt}
    Let $f\in L^{\frac{2n}{n+2}}(\Omega)$ if $n>2$, $f\in L^p(\Omega)$, $p>1$ if $n=2$, be a nonnegative function and let $w$ and $z$ the solutions to

    \begin{equation*}
    \begin{cases}
        -\Delta w =f &\textrm{in}\;\Omega\\
        w=0 &\textrm{on}\;\partial\Omega,
    \end{cases} \quad \quad  \begin{cases}
        -\Delta z =f^{\sharp} &\textrm{in}\;\Omega^{\sharp}\\
        z=0 &\textrm{on}\;\partial\Omega^{\sharp},
    \end{cases}
\end{equation*}
respectively. If $$w^\sharp(x)=z(x) \quad \text{for almost any } x\in \Omega^\sharp,$$
then
there exists $x_0\in \R^n$ such that

\begin{equation*}
    \Omega=\Omega^\sharp +x_0, \qquad u(\cdot+x_0)=u^\sharp(\cdot), \qquad f(\cdot+ x_0)=f^\sharp(\cdot).
\end{equation*}

\end{teorema}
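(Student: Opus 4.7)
\textbf{Proof plan for Theorem \ref{teo:alt}.} The plan is to reduce the rigidity of Talenti's inequality to the rigidity of the P\'olya--Szeg\H{o} inequality, which is known to hold under a non-degeneracy condition on the symmetrized function (the Brothers--Ziemer theorem). The pointwise equality $w^\sharp=z$ a.e.\ on $\Omega^\sharp$ is strictly stronger than the integral equalities needed to collapse a natural chain of inequalities, and the explicit radial form of $z$ will then verify the hypothesis required by Brothers--Ziemer.

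First, I would combine the weak formulations of the two Dirichlet problems with Hardy--Littlewood and P\'olya--Szeg\H{o}. Testing each weak formulation with its own solution yields
\[
\int_\Omega |\nabla w|^2 \, dx = \int_\Omega f w \, dx, \qquad \int_{\Omega^\sharp} |\nabla z|^2 \, dx = \int_{\Omega^\sharp} f^\sharp z \, dx.
\]
By Hardy--Littlewood, $\int_\Omega fw \, dx \le \int_{\Omega^\sharp} f^\sharp w^\sharp \, dx$, while by P\'olya--Szeg\H{o}, $\int_{\Omega^\sharp} |\nabla w^\sharp|^2 \, dx \le \int_\Omega |\nabla w|^2 \, dx$. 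Using $w^\sharp = z$ one closes the loop
\[
\int_\Omega |\nabla w|^2 \, dx = \int_\Omega f w \, dx \le \int_{\Omega^\sharp} f^\sharp w^\sharp \, dx = \int_{\Omega^\sharp} f^\sharp z \, dx = \int_{\Omega^\sharp} |\nabla z|^2 \, dx = \int_{\Omega^\sharp} |\nabla w^\sharp|^2 \, dx \le \int_\Omega |\nabla w|^2 \, dx,
\]
so every inequality is in fact an equality; in particular equality holds in P\'olya--Szeg\H{o} for $w$.

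Second, I would check the non-degeneracy hypothesis $|\{|\nabla w^\sharp| = 0\} \cap \{0 < w^\sharp < w_M\}| = 0$ with $w_M = \sup w$. Since $w^\sharp = z$ is the radial Dirichlet solution on $\Omega^\sharp$ with nonnegative, non-identically-zero right-hand side $f^\sharp$, integrating the radial equation gives $z'(r) = -r^{1-n} \int_0^r s^{n-1} f^\sharp(s) \, ds < 0$ for every $r > 0$ up to $\partial \Omega^\sharp$, hence $|\nabla w^\sharp| > 0$ on $\Omega^\sharp \setminus \{0\}$ and the hypothesis is met. Brothers--Ziemer then produces $x_0 \in \R^n$ with $w(\cdot + x_0) = w^\sharp(\cdot)$ a.e.

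Finally, since $w$ vanishes on $\partial \Omega$ and is strictly positive inside (strong maximum principle) while $w^\sharp$ is strictly positive in $\Omega^\sharp$, the translation identity forces $\Omega = \Omega^\sharp + x_0$; applying $-\Delta$ to both sides together with $-\Delta w = f$ and $-\Delta z = f^\sharp$ yields $f(\cdot + x_0) = f^\sharp(\cdot)$. The main obstacle is the clean invocation of Brothers--Ziemer, which rests on the non-degeneracy check above; once this is in place, the rest is bookkeeping.
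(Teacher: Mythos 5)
The paper does not actually prove Theorem \ref{teo:alt}: it is recalled as a classical result and attributed to the literature (\cite{lions_remark}), so there is no internal proof to match yours against. Your argument is a correct, essentially self-contained alternative, and it differs from the classical Alvino--Lions--Trombetti route. Their proof works at the level of the distribution function: the equality $w^\sharp=z$ forces equality, for a.e.\ level $t$, in the chain of H\"older and isoperimetric inequalities satisfied by $\mu(t)$, so almost every superlevel set of $w$ is a ball, and one then shows the balls are concentric. You instead close the energy chain $\int_\Omega|\nabla w|^2=\int_\Omega fw\le\int_{\Omega^\sharp}f^\sharp w^\sharp=\int_{\Omega^\sharp}f^\sharp z=\int_{\Omega^\sharp}|\nabla z|^2=\int_{\Omega^\sharp}|\nabla w^\sharp|^2\le\int_\Omega|\nabla w|^2$ and invoke the Brothers--Ziemer rigidity of the P\'olya--Szeg\H o inequality; your verification of the non-degeneracy hypothesis via the explicit radial formula $z'(r)=-r^{1-n}\int_0^r s^{n-1}f^\sharp\,ds<0$ is the right (and necessary) step, since Brothers--Ziemer genuinely fails without it. It is worth noting that the source of the paper contains a commented-out statement of exactly the Brothers--Ziemer theorem together with the remark that it applies to $W^{1,p}_0$ functions, so the authors evidently had your route in mind before electing to cite the literature; what your approach buys is a short proof resting on one black box (Brothers--Ziemer, 1988), whereas the classical proof is longer but predates and avoids that machinery.

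Two small points to tighten. First, the whole conclusion (and in particular your non-degeneracy check) requires $f\not\equiv0$; this is assumed globally in the paper but not restated in the theorem, and you use it implicitly. Second, the claim that $w>0$ in $\Omega$ by the strong maximum principle presupposes $\Omega$ connected; it is cleaner to argue that $w(\cdot+x_0)=w^\sharp=z>0$ on $\Omega^\sharp$ gives $\{w>0\}=\Omega^\sharp+x_0$ up to null sets, and since $\{w>0\}\subseteq\Omega$ with $|\Omega|=|\Omega^\sharp|$ one gets $\Omega=\Omega^\sharp+x_0$ up to the usual null-set identifications, after which applying $-\Delta$ distributionally identifies $f(\cdot+x_0)=f^\sharp$ as you say. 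Neither point affects the validity of the approach.
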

\section{Main results}
\label{main_results}
In this Section, we prove the main results of the paper. 
\subsection{The rigidity result}
We start by proving the rigidity result. The main idea is to prove, under our assumptions, that the solutions $u$ to problem \eqref{main_problem} and $v$  to problem \eqref{sym_problem} satisfy a suitable Poisson problem with Dirichlet boundary condition, allowing us to apply Theorem \ref{teo:alt}.

    \begin{proof}[Proof of Theorem \ref{teo:rigidity:k1}]
  Let $0 <k\le\frac{n}{2n-2}$. Let us multiply \eqref{ineq_fundamental_senza_alpha} by $t\mu(t)^{\frac{1}{k}-\frac{2n-2}{n}}$, let us integrate from $0$ to some $\tau\ge v_m$, and let us use  Lemma \ref{lemma3.3}, we obtain
\begin{equation*}
\int_0^\tau n^2\omega_n^\frac{2}{n}t\mu(t)^\frac1k \,dt \le\int_0^\tau-\mu'(t)t\mu(t)^{\delta}\left(\int_0^{\mu(t)}f^*(s)ds\right)\,dt
+\dfrac{|\Omega|^{\delta}}{2\beta^2}{\left(\int_0^{|\Omega|}f^*(s)ds\right)^2}.
\end{equation*}
Here we have  set $\displaystyle\delta=\frac{1}{k}-\frac{2n-2}{n}$.
Let us define $F(\ell)=\displaystyle{\int_0^\ell w^\delta\left(\int_0^w f^*(s)ds\right)\,dw}$, and let us integrate by parts both sides of the last inequality, after rearranging the terms, we have
\begin{equation*}
\tau F(\mu(\tau))+\tau\int_0^\tau n^2\omega_n^\frac{2}{n} \mu(t)^\frac1k dt\le \int_0^\tau F(\mu(t))dt+\int_0^\tau\int_0^t n^2\omega_n^\frac{2}{n}\mu(t)^\frac{1}{k} dr\, dt 
+M_1,
\end{equation*}
where $M_1=\displaystyle{\dfrac{|\Omega|^{\delta}}{2\beta^2}{\left(\int_0^{|\Omega|}f^*(s)ds\right)^2}}.$
Hence, if we set $$\xi(\tau)=\int_0^\tau F(\mu(t))dt+\int_0^\tau\left(\int_0^t n^2\omega_n^\frac{2}{n}\mu(r)^\frac{1}{k} dr\right)\, dt,$$  and $\tau_0=v_m$, we are in the hypothesis of Lemma \ref{lem_Gronwall}, and it holds
     \begin{align*}
            F(\mu(\tau))+\int_0^{\tau}n^2\omega_n^\frac{2}{n} \mu(t)^{\frac{1}{k}}\;dt&\leq \dfrac{1}{v_m}\left(\int_0^{v_m}F(\mu(t))\, dt+\int_0^{v_m}\int_0^t n^2\omega_n^\frac{2}{n}\mu(r)^{\frac{1}{k}}\,dr\, dt+M_1\right),
        \end{align*}
        We can argue in the same way with \eqref{eq_fundamental}, and we get
        \begin{align*}
        F(\phi(t))+\int_0^{\tau}n^2\omega_n^{\frac{2}{n}}\phi(t)^{\frac{1}{k}}\;dt=
            \dfrac{1}{v_m}\left(\int_0^{v_m}F(\phi(t))\, dt+\int_0^{v_m}\int_0^t n^2\omega_n^\frac{2}{n}\phi(r)^{\frac{1}{k}}\, dr\,dt+M_1\right).
        \end{align*}
       If we pass to the limit for $\tau \rightarrow +\infty$ in the previous inequalities, $F(\phi(\tau))\rightarrow 0$ and $F(\mu(\tau))\rightarrow 0$. Hence, we can subtract the two expressions, obtaining
       \begin{equation}\label{eq:k1:mu:phi}
           \begin{aligned}
            \int_0^{+\infty}n^2\omega_n^\frac{2}{n} \phi(t)^{\frac{1}{k}}\,dt-\int_0^{+\infty}n^2\omega_n^\frac{2}{n} \mu(t)^{\frac{1}{k}}\, dt\ge&\dfrac{1}{v_m}\int_0^{v_m}\big(F(\phi(t))-F(\mu(t))\big) \, dt\\+&\dfrac{1}{v_m}\int_0^{v_m}n^2\omega_n^\frac{2}{n}\int_0^{t}\big(\phi(r)^{\frac{1}{k}}-\mu(r)^{\frac{1}{k}}\big)\,dr\,dt .
           \end{aligned}
       \end{equation}

        Let us notice that the left-hand side is, up to a multiplicative constant,
        $$\norma{v}_{L^{k,1}(\Omega^\sharp)}-\norma{u}_{L^{k,1}(\Omega)}$$
        hence, the hypotheses \eqref{ip:rigidity:k1} ensure that
        
         $$\int_0^{v_m}\big(F(\phi(t))-F(\mu(t))\big) \, dt+\dfrac{1}{v_m}\int_0^{v_m}n^2\omega_n^\frac{2}{n}\int_0^{t}\big(\phi(r)^{\frac{1}{k}}-\mu(r)^{\frac{1}{k}}\big)\,dr\,dt=0.$$ 
         The monotonicity of $F$ and \eqref{mf}, ensure us that for almost every $t\le v_m$,  $$\phi(t)=\mu(t),$$
        and, by Definition \ref{distribution:function} of distribution function, this implies $u_m\ge v_m$, while in general, it holds $u_m\le v_m$, hence $u_m=v_m$ and $\Omega$ is a ball.\\
       Now we want to show that $u=u_m$ for $\mathcal{H}^{n-1}-x\in\partial\Omega$.  If we suppose by contradiction that $u_M>u_m,$ we have
        $$\int_{\partial \Omega}u\, d\mathcal{H}^{n-1}>\int_{\partial \Omega}u_m\, d\mathcal{H}^{n-1}=P(\Omega)u_m=P(\Omega^{\sharp}
        )v_m=\int_{\Omega^{\sharp}}v\, d\mathcal{H}^{n-1}.$$
        This means that
        $u_M=u_m$ and $u$ is constant on $\partial \Omega$ almost everywhere. 
        
        If we set $w=u-u_m$ and $z=v-v_m=v-u_m$, these two functions respectively  solve
        $$
        \begin{cases}
            -\Delta w=f & \textrm{in}\; \Omega,\\
            w=0 & \textrm{on}\;\partial \Omega,
        \end{cases}
        \qquad\qquad 
        \begin{cases}
            -\Delta z=f^{\sharp} & \textrm{in}\;\Omega^{\sharp},\\
            z=0 & \textrm{on}\; \partial \Omega^{\sharp},
        \end{cases}
        $$
        moreover,
         $$\abs{\Set{w>t}}=\mu(t+u_m), \quad \quad  \abs{\Set{z>t}}=\phi(t+v_m)=\phi(t+u_m).$$
                
        Hence, we can apply the classical Talenti comparison result \cite[Theorem 1]{talenti76}, obtaining $w^{\sharp}\leq z$ and, as a consequence,  $\mu(t)\leq \phi(t)$ for all $t\in [0,+\infty]$.
        
        On the other hand, the hypotheses \eqref{ip:rigidity:k1} implies that for almost every $t$ $$\mu(t)=\phi(t),$$ and hence $$w^\sharp=z, \quad \text{ for almost every } x\in \Omega^\sharp$$

    We can conclude with the classical result by Alvino-Lions-Trombetti, Theorem \ref{teo:alt}.
    
    \end{proof}
\subsection{Quantitative comparison in  Lorentz norm}
We proceed now with the proof of Theorem \ref{teo:quant_k1} and of Theorem \ref{teo:quant:k2}.
As the proofs of both results rely on the 
propagation of asymmetry contained in Lemma \ref{lembrasco}, we include them in the same section.\\
First of all, we prove the comparison involving the Lorentz norm $L^{k,1}(\Omega)$.
\begin{proof}[Proof of Theorem \ref{teo:quant_k1}]
    Let $0 <k\le\frac{n}{2n-2}$,
    proceeding as in the proof of Theorem \ref{teo:rigidity:k1}, we multiply \eqref{ineq_fundamental} by $t\mu(t)^{\frac{1}{k}-\frac{2n-2}{n}}$, then we integrate from $0$ to $\tau\ge v_m$, obtaining
\begin{equation}\label{inequalityk1}
\begin{aligned}
\int_0^\tau n^2\omega_n^{\frac2n} t\mu(t)^\frac1k\left(1+\frac{\alpha^2(U_t)}{\gamma_n}\right)\,dt \le& \int_0^\tau-\mu'(t)t\mu(t)^{\delta}\left(\int_0^{\mu(t)}f^*(s)ds\right)\,dt+M_1,
\end{aligned}
\end{equation}
where $M_1$ is the same constant that appears in the proof of Theorem \ref{teo:quant_k1}.

Let us consider the function  $F(\ell)$ 
as before, we can integrate by parts both sides of \eqref{inequalityk1} and rearrange the terms to obtain

\begin{equation*}
\begin{aligned}
&\tau F(\mu(\tau))+\tau\int_0^\tau n^2\omega_n^{\frac2n} \mu(t)^\frac1k\left(1+\frac{\alpha^2(U_t)}{\gamma_n}\right) dt\le\\& \int_0^\tau F(\mu(t))dt+\int_0^\tau\int_0^t n^2\omega_n^{\frac2n}\mu(r)^\frac{1}{k}\left(1+\frac{\alpha^2(U_r)}{\gamma_n}\right) dr\, dt +M_1
\end{aligned}
\end{equation*}
If we denote
$$\xi(\tau)=\int_0^\tau F(\mu(t))dt+\int_0^\tau\left(\int_0^t n^2\omega_n^{\frac2n}\mu(r)^\frac{1}{k}\left(1+\frac{\alpha^2(U_r)}{\gamma_n}\right) dr\right)\, dt,$$ 
and $\tau_0=v_m$, we are in the hypothesis of Lemma \ref{lem_Gronwall}, and we can deduce 
\begin{equation}\label{ineq_afterGu}
\begin{aligned}
&F(\mu(\tau))+\int_0^\tau n^2\omega_n^{\frac2n} \mu(t)^\frac1k\left(1+\frac{\alpha^2(U_t)}{\gamma_n}\right) dt\le\\\\
&\dfrac{1}{v_m}\Bigg(\int_0^{v_m} F(\mu(t))dt+\int_0^{v_m}\int_0^t n^2\omega_n^{\frac2n}\mu(r)^\frac{1}{k}\left(1+\frac{\alpha^2(U_r)}{\gamma_n}\right) dr\, dt 
+ M_1\,\Bigg).
\end{aligned}
\end{equation}
Repeating the same computation with \eqref{eq_fundamental}, we can deduce
\begin{equation}\label{ineq_afterGv}
\begin{aligned}
F(\phi(\tau))+\int_0^\tau n^2\omega_n^{\frac2n} \phi(t)^\frac1k dt= \dfrac{1}{v_m}\Bigg(\int_0^{v_m} F(\phi(t))dt
+\int_0^{v_m}\int_0^t n^2\omega_n^{\frac2n}\phi(r)^\frac{1}{k} dr\, dt + M_1\,\Bigg).
\end{aligned}
\end{equation}

We now subtract \eqref{ineq_afterGv} and \eqref{ineq_afterGu}, recalling that for $t\le v_m$, $\mu(t)\le \phi(t)$, 

\begin{equation}\label{phimenomu}
\begin{aligned}
&F(\phi(\tau))-F(\mu(\tau))+n^2\omega_n^{\frac2n}\int_0^\tau  (\phi(t)^\frac1k-\mu(t)^\frac{1}{k}) dt\ge \\ & \dfrac{1}{v_m}\Bigg(\int_0^{v_m} F(\phi(t))-F(\mu(t))\, dt
+n^2\omega_n^{\frac2n}\int_0^{v_m}\int_0^t (\phi(r)^\frac{1}{k}-\mu(r)^\frac{1}{k}) dr\, dt \,\Bigg)+\\
&\frac{n^2\omega_n^{\frac2n}}{\gamma_n}\Bigg(\int_0^\tau \mu(t)^\frac1k \alpha^2(U_t)\, dt- \frac{1}{v_m}\int_0^{v_m}\left(\int_0^t\mu(r)^\frac{1}{k} \alpha^2(U_r)\,dr\right)\, dt\Bigg)\ge\\
&\frac{n^2\omega_n^{\frac2n}}{\gamma_n}\Bigg(\int_0^{v_m} \mu(t)^\frac1k \alpha^2(U_t)\, dt- \frac{1}{v_m}\int_0^{v_m}\left(\int_0^t\mu(r)^\frac{1}{k} \alpha^2(U_r)\, dr\right)\, dt\Bigg).
\end{aligned}
\end{equation}

Thanks to the Fubini-Tonelli Theorem, we can rewrite the last integral in \eqref{phimenomu}
$$\frac{1}{v_m}\int_0^{v_m}\left(\int_0^t\mu(r)^\frac{1}{k} \alpha^2(U_r)\, dr\right)\, dt=\int_0^{v_m}\mu(t)^\frac{1}{k} \alpha^2(U_t)\left(\frac{v_m-t}{v_m}\right)\, dt,$$

and so, if we send $\tau\to +\infty$ in \eqref{phimenomu}, we have

\begin{equation}
\label{start:k1}
        \left[\norma{v}_{L^{k,1}(\Omega^\sharp)}-\norma{u}_{L^{k,1}(\Omega}\right]=\int_0^\tau  (\phi(t)^\frac1k-\mu(t)^\frac{1}{k}) dt\ge \frac{1}{\gamma_n}\int_0^{v_m}\frac{t}{v_m}\mu(t)^\frac{1}{k} \alpha^2(U_t)\, dt.
\end{equation}

We now distinguish the following cases.
\begin{description}
    \item[Case 1] $u_m\ge \frac{v_m}{2}$, in this case, since the superlevel set $U_t$ coincides with $\Omega$ for any $t\le u_m$, from \eqref{start:k1} we obtain 

    \begin{equation}
        \label{case1k1}
        \left[\norma{v}_{L^{k,1}(\Omega^\sharp)}-\norma{u}_{L^{k,1}(\Omega)}\right]\ge\frac{1 }{v_m\gamma_n}\int_0^{\frac{v_m}{2}}t\mu(t)^\frac{1}{k} \alpha^2(U_t)\, dt=\frac{ v_m}{8\gamma_n}\abs{\Omega}^\frac{1}{k}\alpha^2(\Omega).
    \end{equation}

    \item[Case 2] If $u_m<\frac{v_m}{2}$, we define, similarly to \cite{ABMP, brasco}, the quantity
    \begin{equation}\label{ess}
        s_\Omega=\sup\left\{t: \, \mu(t)>\abs{\Omega}\left(1-\frac{\alpha(\Omega)}{4}\right)\right\}, 
    \end{equation}
    and we observe that for $t\le s_\Omega$, the set $U_t$ verifies the hypothesis of Lemma \ref{lembrasco}. We have to distinguish two more subcases
    \end{description}
    \begin{description}
        \item[Case 2.1] $s_\Omega\ge\frac{v_m}{2}$, in this case we can proceed as in Case 1, observing that for $t\le \frac{v_m}{2}$, we are in the hypothesis of Lemma \ref{lembrasco}
        \begin{equation}
        \label{case21k1}
        \begin{aligned}
        \left[\norma{v}_{L^{k,1}(\Omega^\sharp)}-\norma{u}_{L^{k,1}(\Omega)}\right]&\ge\frac{ 1}{v_m\gamma_n}\int_0^{\frac{v_m}{2}}t\mu(t)^\frac{1}{k} \alpha^2(U_t)\, dt\\
        &\ge\frac{1}{\gamma_n}\left(\abs{\Omega}\left(1-\frac{\alpha(\Omega)}{4}\right)\right)^{\frac{1}{k}}\dfrac{v_m}{8}\frac{\alpha^2(\Omega)}{4}\\&\ge\frac{ v_m}{2^{\frac{1}{k}+5}\gamma_n}\abs{\Omega}^\frac{1}{k}\alpha^2(\Omega).
        \end{aligned}
    \end{equation}

    \item[Case 2.2] $s_\Omega<\frac{v_m}{2}$. In this case, we go back to \eqref{eq:k1:mu:phi} which implies
    \end{description}

\begin{equation}\label{caso3k1:int}
   n^2\omega_n^\frac2n \left[\norma{v}_{L^{k,1}(\Omega^\sharp)}-\norma{u}_{L^{k,1}(\Omega)}\right]\ge\frac{1}{v_m} \left(\int_0^{v_m}(F(\phi(t))-F(\mu(t)))\, dt \right),
\end{equation}
   where we have omitted the term
   $$n^2\omega_n^\frac{2}{n}\int_0^{v_m}\int_0^t \left(\phi(r)^\frac{1}{k}-\mu(t)^\frac{1}{k}\right)\, dr\,dt $$
   as it is positive. In order to conclude, we need to bound from below the right-hand side of \eqref{caso3k1:int}. First of all, we observe that $F(\phi(t))-F(\mu(t))\ge0$ for all $t\le v_m$, hence
\begin{equation*}
   n^2\omega_n^\frac2n \left[\norma{v}_{L^{k,1}(\Omega^\sharp)}-\norma{u}_{L^{k,1}(\Omega)}\right]\ge\frac{1}{v_m} \left(\int_{\frac{v_m}{2}}^{v_m}(F(\phi(t))-F(\mu(t)))\, dt \right),
\end{equation*}
   and, as we are assuming that $s_\Omega<\frac{v_m}{2}$, it holds $F(\mu(t))\le F\left(\abs{\Omega}\left(1-\frac{\alpha(\Omega)}{4}\right)\right)$, for all $t\in [v_m/2,v_m]$, hence

       \begin{equation*}
       \begin{aligned}
   n^2\omega_n^\frac2n \left[\norma{v}_{L^{k,1}(\Omega^\sharp)}-\norma{u}_{L^{k,1}(\Omega)}\right]&\ge\frac{1}{2} \left(F\left(\abs{\Omega}\right)-F\left(\abs{\Omega}\left(1-\frac{\alpha(\Omega)}{4}\right)\right) \right)\\
   &=\frac{\abs{\Omega}}{8} F'(\xi)\alpha(\Omega),
   \end{aligned}
\end{equation*}
for some $\xi\in \left[\abs{\Omega}\left(1-\frac{\alpha(\Omega)}{4}\right),\abs{\Omega}\right]$.
Moreover, we can bound $F'(\xi)$ from below

$$F'(\xi)=\xi^\delta \int_0^\xi f^*\ge \xi^{\delta+1}\frac{\norma{f}_{L^1(\Omega)}}{\abs{\Omega}}\ge \abs{\Omega}^\delta \norma{f}_{L^1(\Omega)}\left(1-\frac{\alpha(\Omega)}{4}\right)^{\delta+1}\ge \frac{\abs{\Omega}^\delta \norma{f}_{L^1(\Omega)}}{2^{\delta+1}},$$
hence
  \begin{equation*}
  \left[\norma{v}_{L^{k,1}(\Omega^\sharp)}-\norma{u}_{L^{k,1}(\Omega)}\right]\ge
  \dfrac{|\Omega|^{\delta+1}||f||_{L^1(\Omega)}}{2^{\delta+4}n^2 \omega_n^{\frac{2}{n}}}
  \alpha(\Omega)\geq \frac{\abs{\Omega}^{\frac{1}{k}+\frac{1}{n}}}{2^{\delta+5} n\omega_n^\frac1n } \beta v_m \alpha^2(\Omega).
\end{equation*}
where we have used the fact that $\dfrac{\alpha(\Omega)}{2}\geq \left(\dfrac{\alpha(\Omega)}{2}\right)^2$, since $\alpha(\Omega)<2$.

\end{proof}

We now prove the comparison involving the Lorentz norm $L^{2k,2}(\Omega)$.

\begin{proof}[Proof of Theorem \ref{teo:quant:k2}]
   Let $0 <k\le\frac{n}{3n-4}$ and let us multiply \eqref{ineq_fundamental} by $t\mu^{\delta}(t)$, with $\delta=\frac{1}{k}-\frac{2n-2}{n}$ and integrate between $0$ and $+\infty$. Hence, we have

\begin{equation}\label{talenti_mu2}
\begin{aligned}
    &n^2 \omega_{n}^{\frac{2}{n}} \int_0^{+\infty} t\mu^{\frac{1}{k}}(t)\,dt+  \frac{n^2\omega_n^{\frac{2}{n}}}{\gamma_n}\int_0^{+\infty}t\mu(t)^{\frac{1}{k}}\alpha^2(U_t)\, dt \leq \\
    \le&\int_0^{+\infty}  \left(\int_0^{\mu (t)}f^\ast (s) \, ds\right) \left( - \mu'(t) + \frac{1}{\beta}\int_{\partial U_t^{ext}} \frac{1}{u} \, d\mathcal{H}^{n-1}\right)t \mu(t)^{\delta}\, dt\\
    \le & \int_0^{+\infty} F(\mu(t)) dt+ \frac{\abs{\Omega}^\delta}{2\beta ^{2}} \left(\int_0^{\abs{\Omega}}f^\ast (s ) \, ds\right)^{2},
\end{aligned}
\end{equation}
where in the last inequality we have used $\mu(t)\le \abs{\Omega}$ and \eqref{intmu} in Lemma \ref{lemma3.3}. As far as $v$ is concerned, it holds
\begin{equation}
\label{talentifi2}
    \begin{aligned}
    & n^2 \omega_{n}^{\frac{2}{n}}\int_0^{+\infty} t\phi^{\frac{1}{k}}(t)\, dt\\ &= \int_0^{+\infty}  \left(\int_0^{\phi (t)}f^\ast (s) \, ds\right) \left( - \phi'(t) + \frac{1}{\beta}\int_{\partial V_t^{ext}} \frac{1}{u} \, d\mathcal{H}^{n-1}\right)t \phi(t)^{\delta}\, dt\\
    &= \int_0^{+\infty} F(\phi(t)) dt+ \frac{\abs{\Omega}^\delta}{2\beta ^2}\left(\int_0^{\abs{\Omega}}f^\ast (s) \, ds\right)^2.
    \end{aligned}
\end{equation}

Subtracting \eqref{talenti_mu2} and\eqref{talentifi2}, we have

\begin{equation}
    \label{intermedio}
    \norma{v}_{L^{2k,2}(\Omega)}^2-\norma{u}_{L^{2k,2}(\Omega)}^2\ge \frac{1}{n^2 \omega_n^{\frac{2}{n}}}\int_0^\infty F(\phi(t))-F(\mu(t))
dt+    
\frac{1}{\gamma_n}\int_0^{+\infty}t\mu(t)^{\frac{1}{k}}\alpha^2(U_t)\, dt. 
\end{equation}

and, we observe, that  in \cite{ANT} it is proved that
\begin{equation} \label{F-F}
    \int_0^\infty F(\phi(t))-F(\mu(t)) dt\geq 0.
\end{equation}
Now, let us distinguish the following case

\begin{description}
    \item[Case 1]: $u_{m}\geq \dfrac{v_m}{2}$. In this case, since the superlevel set $U_t$ coincides with $\Omega$ for any $t\le u_m$, it holds
    \begin{equation}
    \norma{v}_{L^{2k,2}(\Omega)}^2-\norma{u}_{L^{2k,2}(\Omega)}^2\geq \frac{1}{\gamma_n}\int_{0}^{\frac{v_m}{2}}t\mu(t)^{\frac{1}{k}}\alpha^2(U_t)\, dt= \frac{1}{\gamma_n}\abs{\Omega}^{\frac{1}{k}}\dfrac{v_m^2}{8}\alpha^2(\Omega).
    \end{equation}
    \item[Case 2:] $u_{m}<\dfrac{v_m}{2}$. In this case we define $s_{\Omega}$ as in \eqref{ess}
    and we observe that for $t\le s_\Omega$, $U_t$ satisfies the hypothesis of \ref{lembrasco}. We need to distinguish, once again, two subcases.
    \end{description}
    \begin{description}
        \item[Case 2.1] $s_{\Omega}\geq\dfrac{v_m}{2}$. Using Lemma \ref{lembrasco}, we obtain

        \begin{equation}
        \begin{aligned}
    \norma{v}_{L^{2k,2}(\Omega)}^2-\norma{u}_{L^{2k,2}(\Omega)}^2&\geq \frac{1}{\gamma_n}\int_{0}^{\frac{v_m}{2}}t\mu(t)^{\frac{1}{k}}\alpha^2(U_t)\, dt\\&\geq \frac{1}{\gamma_n}\left(\abs{\Omega}\left(1-\frac{\alpha(\Omega)}{4}\right)\right)^{\frac{1}{k}}\dfrac{v_m^2}{8}\frac{\alpha^2(\Omega)}{4}\\
    &\geq\frac{1}{\gamma_n 2^{\frac{1}{k}+5}}\abs{\Omega}^{\frac{1}{k}}{v_m^2}{\alpha^2(\Omega)}.
    \end{aligned}
    \end{equation}
    \item[Case 2.2] $s_{\Omega}<\dfrac{v_m}{2}$. We want to bound \eqref{F-F} from below.

    Let us multiply \eqref{ineq_fundamental} by $tF(\mu(t)) \mu(t)^{-2+\frac{2}{n}}$, and let us observe that for $0<k\le\frac{n}{3n-4}$ $F(\ell)\ell^{-2+\frac{2}{n}}$ is increasing.
Let us integrate between $0$ and $\tau>v_m$, we have
    \begin{equation}
\begin{aligned}
    n^2 \omega_{n}^{\frac{2}{n}} \int_0^{\tau} tF(\mu(t))\,dt\leq& \int_0^{\tau} t \mu(t)^{-2+\frac{2}{n}} F(\mu(t))\left(\int_0^{\mu (t)}f^\ast (s )\, ds\right)(-\mu'(t))\, dt+\\&+\frac{F(\abs{\Omega})\abs{\Omega}^{-2+\frac{2}{n}}}{2\beta ^{2}} \left(\int_0^{\abs{\Omega}}f^\ast (s ) \, ds\right)^{2},
\end{aligned}
\end{equation}
Let us use the following notation: 
$$M_2:=\frac{F(\abs{\Omega})\abs{\Omega}^{-2+\frac{2}{n}}}{2\beta ^{2}} \left(\int_0^{\abs{\Omega}}f^\ast (s ) \, ds\right)^{2},  $$

and 
$$
H(l)=\int_{0}^{l} w^{-\frac{2(n-1)}{n}} F(w) \biggl( \int_0^{w} f^*(s) \, ds \biggr) \, dw.
$$

We integrate by parts the first integral, obtaining 
\begin{equation}
\label{gloria}
\tau \left(n^2\omega_n^{\frac{2}{n}}\int_0^{\tau}   F (\mu(t)) \, dt + H(\mu(\tau))\right) \leq \int_{0}^{\tau} \int_0^t n^2\omega_n^{\frac{2}{n}} F(\mu(r)) \, dr dt+ \int_0^{\tau} H(\mu(t)) \, dt +M_2
\end{equation}
The previous inequality holds as an equality by replacing $\mu$ by $\phi$:
\begin{equation}
\label{paoli}
\tau \left(n^2\omega_n^{\frac{2}{n}}\int_0^{\tau}   F (\phi(t)) \, dt + H(\phi(\tau))\right) = \int_{0}^{\tau} \int_0^t n^2\omega_n^{\frac{2}{n}} F(\phi(r)) \, dr dt+ \int_0^{\tau} H(\phi(t)) \, dt +M_2
\end{equation}
We subtract \eqref{gloria} and \eqref{paoli} and apply Gronwall Lemma \ref{lem_Gronwall}  with $\tau_0=v_m$: 
\begin{equation}
\begin{aligned}
n^2\omega_n^{\frac{2}{n}}&\int_0^{\tau}   F (\phi(t)) \, dt + H(\phi(\tau))- \left(n^2\omega_n^{\frac{2}{n}}\int_0^{\tau}   F (\mu(t)) \, dt + H(\mu(\tau))\right)\ge 
    \\ &\frac{1}{v_m}\left(\int_0^{v_m}\int_{0}^t n^2\omega_n^{\frac{2}{n}}(F(\phi(r)-F(\mu(r))\right)\,dr dt+ \int_0^{v_m} H(\phi(t))-H(\mu(t))\;dt.
\end{aligned}
\end{equation}

Now, let us observe that for $t\le v_m$, $\phi(t)\ge \mu(t)$, and the function $H$ is increasing. Combining this with the limit as $\tau$ goes to $+\infty$ we obtain

\begin{equation}\label{intermedio2}
\int_0^{+\infty}   (F (\phi(t)) - F (\mu(t)) )\, dt \ge 
    \frac{1}{v_m}\left(\int_0^{v_m}\int_{0}^t (F(\phi(r)-F(\mu(r))\, dr\, dt\right).
\end{equation}

Now combining \eqref{intermedio} and \eqref{intermedio2} we have
$$n^2\omega_n^{\frac{2}{n}}\left(\norma{v}^2_{L^{2k,2}(\Omega^\sharp)}-\norma{u}^2_{L^{2k,2}(\Omega)}\right)\ge  \frac{1}{v_m}\left(\int_0^{v_m}\int_{0}^r (F(\phi(t)-F(\mu(t))\, dt\, dr\right).
 $$

The integral on the right-hand side can be bounded as follows
 \begin{equation*}
 \begin{aligned}
     \int_{\frac{v_m}{2}}^{v_m}\int_{\frac{v_m}{2}}^r (F(\phi(t))-F(\mu(t))\, dt\, dr&\ge \Bigg(F(\abs{\Omega})-F\left(\abs{\Omega}\left(1-\dfrac{\alpha}{4}\right)\right)\Bigg)\int_{\frac{v_m}{2}}^{v_m}\left(r-\frac{v_m}{2}\right) \;dr,
     \end{aligned}
 \end{equation*}
where we have used that $s_\Omega<v_m/2$, so $\mu(t)\le \abs{\Omega}\left(1-\frac{\alpha}{4}\right)$ for $t\in \left[\frac{v_m}{2}, v_m\right]$. 

Hence
\begin{equation*}
     n^2\omega_n^{\frac{2}{n}}\left(\norma{v}^2_{L^{2k,2}(\Omega^\sharp)}-\norma{u}^2_{L^{2k,2}(\Omega)}\right)\ge \frac{v_m\abs{\Omega}}{2^5} F'(\xi)\alpha(\Omega)
\end{equation*}
with $\xi\in \left(\abs{\Omega}\left(1-\frac{\alpha(\Omega)}{4}\right),\abs{\Omega}\right)$
as before, we can uniformly bound $F'(\xi)$ from below 
$$F'(\xi)\ge \frac{\abs{\Omega}^\delta \norma{f}_{L^1(\Omega)}}{2^{\delta+1}}.$$
Hence, 
 \begin{equation*}
  \left[\norma{v}^2_{L^{2k,2}(\Omega^\sharp)}-\norma{u}^2_{L^{2k,2}(\Omega)}\right]\ge
  v_m\dfrac{|\Omega|^{\delta+1}||f||_{L^1(\Omega)}}{2^{\delta+6}n^2 \omega_n^{\frac{2}{n}}}
  \alpha(\Omega)\geq  v_m^2\frac{\abs{\Omega}^{\frac{1}{k}+\frac{1}{n}}}{2^{\delta+7} n\omega_n^\frac1n } \beta \alpha^2(\Omega),
\end{equation*}
where we have used the fact that $\dfrac{\alpha(\Omega)}{2}\geq \left(\dfrac{\alpha(\Omega)}{2}\right)^2$, since $\alpha(\Omega)<2$.
     \end{description}

\end{proof}

\begin{oss} \label{rem:1}
In \cite{ANT} the authors also prove that in the case $f\equiv 1$, it holds
		\begin{equation*}
		\begin{aligned}
         &\norma{u}_{L^{k,1}(\Omega)} \leq \norma{v}_{L^{k,1}(\Omega^{\sharp})},\\
		&\norma{u}_{L^{2k,2}(\Omega)} \leq \norma{v}_{L^{2k,2}(\Omega^{\sharp})}, 
		\end{aligned}
        \quad\quad {\rm if} \:\; \displaystyle{0 <k \leq \frac{n}{n-2}},
		\end{equation*} 
  that is an improvement of \eqref{diseq_f_generica} and \eqref{fgen2}. 
		We stress that the proof of Theorems \ref{teo:quant_k1} and  \ref{teo:quant:k2} can be adapted to the case $f\equiv 1$, although now the admissible $k$ varies in a wider range.
\end{oss}

\subsection{Pointwise quantitative comparison}
We conclude the section of main results with the proof of Theorem \ref{quantitativa_puntuale}, that is the one that mostly recalls the result in \cite{ABMP}. 
\begin{proof}[Proof of Theorem \ref{quantitativa_puntuale}]
 Let us assume that $\alpha(\Omega)>0$, otherwise, the inequality \eqref{stima:quant:punt} is trivial.
We set
     \begin{equation}\label{eps}
        \varepsilon:=||v-u^{\sharp}||_{L^\infty(\Omega^\sharp)},
    \end{equation}
    and we observe that definition \eqref{eps} implies
    \begin{equation*}
      v(x)-\varepsilon  \leq u^{\sharp}(x)\leq v(x), \qquad \text{for almost any } x\in \Omega^\sharp
    \end{equation*}
  and it follows that 
\begin{equation}\label{distr}
\phi(t+\varepsilon)\leq \mu(t) \leq \phi(t).
\end{equation}
Moreover, as $\phi$ is absolutely continuous, 
we have
\begin{equation*}
    \phi(t+\varepsilon)-\phi(t)=\int_t^{t+\varepsilon}\phi'=\int_t^{t+\varepsilon}\frac{1}{\beta}\int_{\partial V_r^{ext}}\frac{1}{v}\, d\mathcal{H}^1\, dr -4\pi \varepsilon\ge -4\pi \varepsilon.
\end{equation*}
So, \eqref{distr} becomes 
\begin{equation}
    \label{muquasinu}
\phi(t)-4\pi\varepsilon\leq \mu(t) \leq \phi(t).
\end{equation}
By the definition of $s_\Omega$ \eqref{ess} and the property of decreasing rearrangement, we get
\begin{equation}
\label{muustar}
\mu(s_\Omega)=\mu\left(u^\ast\left(\abs{\Omega}\left(1-\frac{\alpha(\Omega)}{4}\right)\right)\right)   \le \abs{\Omega}\left(1-\frac{\alpha(\Omega)}{4}\right).
\end{equation}
So, combining \eqref{muquasinu}, \eqref{muustar} and  the absolutely continuity 
 of $\phi(t)$, we have
\begin{equation*}
    \begin{aligned}
    \abs{\Omega}\frac{\alpha(\Omega)}{4}&\le \abs{\Omega}-\mu(s_\Omega)\le \abs{\Omega}- \phi(s_\Omega) + 4\pi\varepsilon
    =\int_0^{s_{\Omega}}-\phi'(t)dt +4\pi\varepsilon\\&=\int_0^{s_{\Omega}} -\frac{1}{\beta}\int_{\partial V_t^{ext}}\frac{1}{v}\, d\mathcal{H}^1\, dt +4\pi(s_\Omega+\varepsilon)\leq 4\pi(s_\Omega+\varepsilon),
    \end{aligned}
\end{equation*}
that gives 
\begin{equation}
    \label{veditu}
    s_\Omega\ge \frac{\abs{\Omega}\alpha(\Omega)}{16\pi}-\varepsilon.
\end{equation}

  We observe that,  in the case $f\equiv1$ and $n=2$, \eqref{ineq_fundamental} and \eqref{eq_fundamental} read
    	\begin{equation} 
	\label{talentimu}
	4\pi\left(1+\frac{\alpha^2(U_t)}{\gamma_2}\right)  \leq  \left( - \mu'(t) + \frac{1}{\beta }\int_{\partial U_t^{ext}} \frac{1}{u} \, d\mathcal{H}^1\right)
	\end{equation}
	and
\begin{equation} 
	\label{talentiphi}
	4\pi  =  \left( - \phi'(t) + \frac{1}{\beta }\int_{\partial V_t^{ext}} \frac{1}{v} \, d\mathcal{H}^1\right).
	\end{equation}
We multiply \eqref{talentimu} and \eqref{talentiphi} by $t$ and we integrate from 0 to $\tau\ge v_m,$ 
\begin{align*}
    & 2 \pi \tau^2+\frac{4\pi}{\gamma_2}\int_0^\tau t \alpha^2(U_t)\, dt\le \int_0^\tau t(-\mu'(t))\, dt+\frac{\abs{\Omega}}{2\beta^2}\\
    & 2 \pi \tau^2=\int_0^\tau t(-\phi'(t))\, dt+\frac{\abs{\Omega}}{2\beta^2}
\end{align*}
and hence, by subtracting the two expressions and performing an integration by parts

\begin{equation}\label{64}
    \frac{4\pi}{\gamma_2}\int_0^\tau t \alpha^2(U_t)\, dt\le \tau (\phi(\tau)-\mu(\tau)) \quad \forall \tau\ge v_m,
\end{equation}
where on the right hand side we have omitted $\int_0^\tau (\mu-\phi)\, dt$ as it is negative. 

Now, we can distinguish two cases.

\begin{description}
    \item[Case 1] $s_\Omega\ge v_m$, in this case we can compute \eqref{64} for $\tau=s_\Omega$, and we can observe that from \eqref{muquasinu} it follows that  $\phi-\mu\le 4\pi \varepsilon$, obtaining
    \begin{equation}\label{65}
         \frac{1}{8\gamma_2}s_\Omega \alpha^2(\Omega)\le\varepsilon.
    \end{equation}

    If $\varepsilon\le\frac{\abs{\Omega}\alpha(\Omega)}{2^5\pi}$, we can combine \eqref{veditu} and \eqref{65} to obtain
    $$\varepsilon\ge  \frac{\abs{\Omega}}{2^8\gamma_2\pi} \alpha^3(\Omega). $$

    On the other hand, if $\varepsilon>\frac{\abs{\Omega}\alpha(\Omega)}{2^5\pi}$ it is sufficient to recall that $\alpha(\Omega)<2$ to obtain

$$\varepsilon\ge  \frac{\abs{\Omega}}{2^7\pi} \alpha^3(\Omega). $$
\item[Case 2] If $s_\Omega<v_m$, let us observe that
$\phi(s_\Omega)=\abs{\Omega}$, combining \eqref{muquasinu} and \eqref{muustar}, we get

\begin{equation}
    4\pi \varepsilon\ge \phi(s_\Omega)-\mu(s_\Omega)\ge \frac{\abs{\Omega}}{4} \alpha(\Omega),
\end{equation}
and, since $\frac{\alpha(\Omega)}{2}\geq \frac{\alpha^3(\Omega)}{8}$,we get
$$\varepsilon\ge  \frac{\abs{\Omega}}{2^6\pi} \alpha^3(\Omega). $$
\end{description}
\end{proof}

\section{Conclusions and Open Problems}\label{daners}
In this Section, we collect some applications of our results and some possible future perspectives.
\subsection{An alternative proof of the quantitative Bossel-Daners and Saint-Venant inequality}

In this section, we give an alternative proof of the quantitative version of the Saint-Venant inequality and, in the 2-dimensional case, of the Bossel-Daners inequality, using  Theorem \ref{teo:quant_k1} and Theorem \ref{teo:quant:k2}. The proof of these quantitative enhancements is based on two immediate ideas that we will recall in the following proofs. The most delicate part is to show the uniformity of the constants in front of the square of the asymmetry.
	
 We start by defining the Robin Torsional rigidity  of the set $\Omega$   
\begin{equation*}\label{torRNintro}
T_{\beta}( \Omega)= 
\max_{\substack{w\in W^{1,2}(\Omega)\\ w\not \equiv0}}
\dfrac{\left(\displaystyle\int_{\Omega}w\;dx\right)^2}{\displaystyle\int _{\Omega}|\nabla w|^2\;dx+\beta\displaystyle\int_{\partial \Omega}w^2 \;d\mathcal{H}^{n-1}}
\end{equation*}
 and we observe that the maximum is achieved by the solution $u$ to problem \eqref{main_problem} when the right-hand side $f\equiv 1$. Moreover, one can rewrite the Robin Torsional rigidity as follows
 $$T_\beta(\Omega)=\int_\Omega u\, dx. $$
  In \cite{bugia},  the authors proved that a Saint-Venant-type inequality holds  in the more general framework of $SBV$ functions, and in \cite{nahon}, it is improved in a quantitative way.  More precisely,  the authors proved the existence of a constant $\Tilde{C}$  depending only on $\beta$,  $n$ and $|\Omega|$  for which
\begin{equation}\label{nahon_quant}
 T_\beta(\Omega^\sharp)-T_\beta(\Omega)\ge \Tilde{C}\alpha^2(\Omega).   
\end{equation}

We recover the same result by means of a completely different approach.

\begin{corollario}[Quantitative Saint-Venant inequality]\label{cor:sv}
Let $\Omega$ be an open, bounded, and Lipschitz set of $\mathbb{R}^n$. Then, there exists a constant $C_4=C_4(|\Omega|, \beta, n)>0$ such that : 
\begin{equation}\label{quant_tor}
T_\beta(\Omega^\sharp)-T_\beta(\Omega)\ge C_4\alpha^2(\Omega),
\end{equation}
where $\Omega^\sharp$ is the ball with the same volume as $\Omega$.
\end{corollario}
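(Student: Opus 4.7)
The plan is to obtain \eqref{quant_tor} as an immediate corollary of Theorem \ref{teo:quant_k1} applied with the exponent $k=1$ and the datum $f\equiv 1$. The two ``immediate ideas'' alluded to in the preamble of this section are the following. First, when $f\equiv 1$ the solution $u$ of \eqref{main_problem} is the Robin torsion function, so by its own variational characterization $T_\beta(\Omega)=\int_\Omega u\,dx=\|u\|_{L^1(\Omega)}$, and analogously $T_\beta(\Omega^\sharp)=\|v\|_{L^1(\Omega^\sharp)}$. Second, the Cavalieri principle recalled after Definition \ref{lorentz} gives $\|u\|_{L^{1,1}(\Omega)}=\int_0^\infty\mu(t)\,dt=\|u\|_{L^1(\Omega)}$, and likewise for $v$. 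Combining the two identifications, the Saint-Venant deficit rewrites as
\[
T_\beta(\Omega^\sharp)-T_\beta(\Omega)=\|v\|_{L^{1,1}(\Omega^\sharp)}-\|u\|_{L^{1,1}(\Omega)},
\]
which is exactly the quantity controlled from below by Theorem \ref{teo:quant_k1}.

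The only point that needs a moment's care is that $k=1$ is not in general in the admissible range of Theorem \ref{teo:quant_k1}, since that result is stated for $k\le n/(2n-2)$, a bound strictly smaller than $1$ as soon as $n\ge 3$. This is precisely where Remark \ref{rem:1} enters: for the particular datum $f\equiv 1$ the admissible range widens to $k\le n/(n-2)$ (all $k>0$ when $n=2$), which comfortably contains $k=1$ for every $n\ge 2$. Once this is invoked, Theorem \ref{teo:quant_k1} directly delivers
\[
T_\beta(\Omega^\sharp)-T_\beta(\Omega)\ge C_1\,\alpha^2(\Omega),
\]
and setting $C_4:=C_1\big|_{k=1,\,f\equiv 1}$ concludes.

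The step I expect to require the most attention is the verification that the explicit constant $C_1$ of Theorem \ref{teo:quant_k1}, evaluated at $k=1$ and $\|f\|_1=|\Omega|$, depends \emph{only} on the three parameters $|\Omega|$, $\beta$ and $n$ claimed in the statement, with no residual geometric dependence on $\Omega$. This is the ``uniformity of the constants'' flagged in the preamble of this subsection. Inspecting the formula for $C_1$ given in Theorem \ref{teo:quant_k1}, every factor is manifestly a function only of $|\Omega|$, $\|f\|_1$, $\beta$ and $n$, so after substituting $\|f\|_1=|\Omega|$ the dependence collapses to $|\Omega|$, $\beta$ and $n$ as required. Beyond this bookkeeping I do not foresee any further obstacle: the genuinely hard work, namely propagating the Fraenkel asymmetry from $\Omega$ to the level sets of $u$ and turning the quantitative isoperimetric inequality into an $\alpha^2(\Omega)$ remainder, has already been carried out inside the proof of Theorem \ref{teo:quant_k1} itself.
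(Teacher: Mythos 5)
Your proof is correct and follows exactly the paper's route: the paper likewise deduces \eqref{quant_tor} as an immediate consequence of Theorem \ref{teo:quant_k1} with $k=1$ and $f\equiv 1$, invoking Remark \ref{rem:1} to justify that $k=1$ lies in the admissible range. Your additional bookkeeping (identifying $T_\beta$ with the $L^{1,1}$ norm via Cavalieri and checking the constant's dependence) is a faithful expansion of what the paper leaves implicit.
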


\begin{proof}
    Inequality \eqref{quant_tor} is an immediate consequence of Theorem \ref{teo:quant_k1}, with the choice $k=1$ that is allowed, in view of the Remark \ref{rem:1}.
\end{proof}

We now recall  the variational characterization of the first Robin eigenvalue: 
\begin{equation*}
		\lambda_\beta (\Omega)=  \min_{\substack{w\in W^{1,2}(\Omega)\\w\not \equiv0}} \dfrac{\displaystyle\int _{\Omega}|\nabla w|^2\;dx+\beta\displaystyle\int_{\partial\Omega}w^2 \;d\mathcal{H}^{n-1}}{\displaystyle\int_{\Omega}w^2\;dx}.
	\end{equation*}	
The isoperimetric inequality is proved in  \cite{bossel, daners} and the quantitative result in  \cite{BFNT_Faber_Krahn}, proving that there exists a constant $\Tilde{C_1}$
\begin{equation}\label{sta_fk}
\lambda_\beta(\Omega)-\lambda_\beta(\Omega^\sharp)\ge \Tilde{C_1}\alpha^2(\Omega).
\end{equation}
Here, we give an alternative proof of \eqref{sta_fk}.

\begin{corollario}[Quantitative Bossel-Daners inequality]\label{cor:bd}
Let $\Omega$ be an open, bounded, and Lipschitz set of $\mathbb{R}^2$. Then, there exists a constant $C_5=C_5(\Omega, \beta)>0$ such that : 
\begin{equation}\label{quant_eig}
\lambda_\beta(\Omega)-\lambda_\beta(\Omega^\sharp)\ge C_5\alpha^2(\Omega),
\end{equation}
where $\Omega^\sharp$ is the ball with the same volume as $\Omega$.
\end{corollario}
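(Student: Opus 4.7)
The plan is to apply Theorem \ref{teo:quant:k2} with datum equal to (a scalar multiple of) the first Robin eigenfunction of $\Omega$, and then to convert the resulting quantitative $L^2$-Talenti gap into a gap between the eigenvalues by inserting the symmetrized solution into the Rayleigh quotient on $\Omega^\sharp$. The restriction $n=2$ appears precisely because the $L^2$-norm corresponds to the endpoint exponent $k=1$ admissible in Theorem \ref{teo:quant:k2}: indeed $\frac{n}{3n-4}=1$ if and only if $n=2$, so the very same scheme collapses in higher dimensions.

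First, I would take $\phi_1$ to be the (positive) first Robin eigenfunction on $\Omega$ normalized so that $\norma{\phi_1}_{L^2(\Omega)}=1$. By definition $\phi_1$ solves \eqref{main_problem} with nonnegative $L^2$-datum $f=\lambda_\beta(\Omega)\phi_1$, and its symmetrized companion $\psi$ solves \eqref{sym_problem} with $f^\sharp=\lambda_\beta(\Omega)\phi_1^\sharp$. Applying Theorem \ref{teo:quant:k2} with $k=1$, so that $L^{2k,2}=L^2$, and using $\norma{\phi_1^\sharp}_{L^2(\Omega^\sharp)}=\norma{\phi_1}_{L^2(\Omega)}=1$, one obtains
\[
\norma{\psi}_{L^2(\Omega^\sharp)}^2-1\ \geq\ C_2\,\alpha^2(\Omega),
\]
with $C_2$ depending only on $|\Omega|$, $\beta$ and $\norma{f}_{L^1}=\lambda_\beta(\Omega)\norma{\phi_1}_{L^1}$, hence ultimately only on $\Omega$ and $\beta$.

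Next, I would insert $\psi$ into the Rayleigh characterization of $\lambda_\beta(\Omega^\sharp)$. Multiplying $-\Delta\psi=\lambda_\beta(\Omega)\phi_1^\sharp$ by $\psi$, integrating by parts and using the Robin boundary condition, the numerator equals $\lambda_\beta(\Omega)\int_{\Omega^\sharp}\psi\,\phi_1^\sharp\,dx$, which by Cauchy--Schwarz is at most $\lambda_\beta(\Omega)\,\norma{\psi}_{L^2(\Omega^\sharp)}$. This produces the key inequality
\[
\lambda_\beta(\Omega^\sharp)\,\norma{\psi}_{L^2(\Omega^\sharp)}\ \leq\ \lambda_\beta(\Omega),
\]
from which one simultaneously reads off a lower bound for $\lambda_\beta(\Omega)$ and the upper bound $\norma{\psi}_{L^2(\Omega^\sharp)}\leq \lambda_\beta(\Omega)/\lambda_\beta(\Omega^\sharp)$. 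Combining this with the elementary identity $t-1=(t^2-1)/(t+1)$ applied to $t=\norma{\psi}_{L^2(\Omega^\sharp)}\geq 1$ and with the $L^2$ gap of the previous step, one then finds
\[
\lambda_\beta(\Omega)-\lambda_\beta(\Omega^\sharp)\ \geq\ \lambda_\beta(\Omega^\sharp)\,\frac{\norma{\psi}_{L^2(\Omega^\sharp)}^2-1}{\norma{\psi}_{L^2(\Omega^\sharp)}+1}\ \geq\ \frac{C_2\,\lambda_\beta(\Omega^\sharp)^2}{2\,\lambda_\beta(\Omega)}\,\alpha^2(\Omega),
\]
giving the claim with $C_5=C_2\,\lambda_\beta(\Omega^\sharp)^2/(2\lambda_\beta(\Omega))$.

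The main obstacle is essentially of a bookkeeping nature: one has to verify that the dependence of $C_2$ on $\norma{f}_{L^1}=\lambda_\beta(\Omega)\norma{\phi_1}_{L^1}$, together with the extra factor $\lambda_\beta(\Omega^\sharp)^2/\lambda_\beta(\Omega)$, is indeed compatible with a constant of the form $C_5(\Omega,\beta)$; these quantities are not purely geometric, but they are determined by $\Omega$ and $\beta$, which is what the statement allows. The conceptual point worth underlining is that Theorem \ref{teo:quant:k2} delivers an $L^2$ comparison of exactly the right strength to promote the soft Bossel--Daners inequality to its quadratic-in-asymmetry counterpart, and this mechanism genuinely works only in dimension two.
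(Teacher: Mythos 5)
Your proposal is correct and follows essentially the same route as the paper: apply Theorem \ref{teo:quant:k2} with $k=1$ (admissible precisely because $n=2$) to the datum $f=\lambda_\beta(\Omega)\phi_1$, then test the symmetrized auxiliary solution in the Rayleigh quotient on $\Omega^\sharp$ via Cauchy--Schwarz. Two minor differences: your identity $t-1=(t^2-1)/(t+1)$ replaces the paper's Bernoulli-inequality step and thereby avoids its implicit smallness assumption on $\alpha(\Omega)$, while the paper goes one step further than you do on the bookkeeping, explicitly cancelling $\lambda_\beta(\Omega)^2\|\phi_1^\sharp\|_1^2$ between $C_2$ and $\int_{\Omega^\sharp}w^2\,dx$ to obtain a constant depending only on $|\Omega|$ and $\beta$ --- which is stronger than, but not required by, the statement's $C_5=C_5(\Omega,\beta)$.
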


\begin{proof}

To obtain \eqref{quant_eig}, we follow the same scheme as in \cite{ANT}, considering the enhanced inequality \eqref{quant_l2k} instead of the direct comparison between the two norms; subsequently, we delve into the discussion about the independence of $C_2$. Let us consider the first positive $L^2$-normalized eigenfunction $u$ of $\Omega$, namely the positive $L^2$-normalized solution of the eigenvalue problem
\begin{equation}\label{eig_problem}
    \begin{cases}
    -\Delta u=\lambda_\beta(\Omega) u \, &\text{in } \Om, \\[1ex]
    \displaystyle{\frac{\partial u}{\partial\nu}+\beta u=0} & \text{on } \partial\Omega,
    \end{cases}
\end{equation}
and consider the auxiliary problem on $\Om^\sharp$
\begin{equation}\label{aux_problem}
    \begin{cases}
    -\Delta w=\lambda_\beta(\Omega) u^\sharp \, &\text{in } \Om^\sharp, \\[1ex]
    \displaystyle{\frac{\partial w}{\partial\nu}+\beta w=0} & \text{on } \partial\Omega^\sharp.
    \end{cases}
\end{equation}
Inequality \eqref{quant_l2k} with $k=1$, that can be chosen as $n=2$, together with \eqref{eq_norm_sym}, gives
\begin{equation}\label{quant_denom}
\int_{\Om^\sharp}(u^\sharp)^2\:dx=\int_{\Om}u^2\:dx\le\int_{\Om^\sharp}w^2\:dx-C_2\alpha^2(\Omega).
\end{equation}
We now test \eqref{aux_problem} with $w$, use the Cauchy-Scwarz inequality and estimate \eqref{quant_denom}, obtaining
\begin{align*}
    \int_{\Om^\sharp}|\nabla w|^2\:dx+\beta\int_{\partial\Omega^\sharp}w^2\:d\mathcal{H}^1&=-\int_{\Om^\sharp}w\Delta w\:dx=\lambda_\beta(\Omega)\int_{\Om^\sharp}u^\sharp w\:dx\\
    &\le\lambda_\beta(\Omega)\left(\int_{\Om^\sharp}(u^\sharp)^2\:dx\right)^{1/2}\left(\int_{\Om^\sharp}w^2\:dx\right)^{1/2}\\
&\le\lambda_\beta(\Omega)\left(\int_{\Om^\sharp}w^2\:dx-C_2\alpha^2(\Omega)\right)^{1/2}\left(\int_{\Om^\sharp}w^2\:dx\right)^{1/2}\\
&\le\lambda_\beta(\Omega)\left(1-\frac{C_2}{\int_{\Om^\sharp}w^2\:dx}\alpha^2(\Omega)\right)^{1/2} \int_{\Om^\sharp}w^2\:dx\\
&\le\lambda_\beta(\Omega)\left(1-\frac{C_2}{2\int_{\Om^\sharp}w^2\:dx}\alpha^2(\Omega)\right) \int_{\Om^\sharp}w^2\:dx,\\
\end{align*}
where in the last inequality we have used the Bernoulli inequality, for which one has to suppose $\alpha(\Omega)$ sufficiently small. 

Using $w$ as a test for $\lambda_\beta(\Omega^\sharp)$ and rearranging terms we get
(quoziente di Rayleigh con $\Omega^{\sharp}$)
$$
\frac{\lambda_\beta(\Omega)-\lambda_\beta(\Omega^\sharp)}{\lambda_\beta(\Omega)}\ge\frac{C_2}{2\int_{\Om^\sharp}w^2\:dx}\alpha^2(\Omega).
$$
It remains to show that $\dfrac{C_2}{2\int_{\Om^\sharp}w^2\:dx}$ only depends on $|\Omega|,\beta$.

First of all, we recall that, in our case, $f=\lambda_\beta(\Omega) u^\sharp$, $n=2$ and $k=1$, so 
\begin{align*}
C_2&=\left(\frac{\abs{\Omega}^{\frac{1}{n}-1}\norma{f}_1}{\beta n\omega_n^\frac{1}{n}}\right)^2\abs{\Omega}^\frac{1}{k}\min\left\{\frac{1}{2^{\frac{1}{k}+5}\gamma_n}, \frac{\beta\abs{\Omega}^\frac{1}{n}}{2^{\frac{1}{k}+5+\frac{2}{n}}n\omega_n^\frac{1}{n}}\right\}
\\
&=\frac{\lambda_\beta^2(\Omega)\|u^\sharp\|_1^2}{4\beta^2\pi}\min\left\{\frac{1}{2^6\gamma_2}, \frac{\beta\abs{\Omega}^\frac{1}{2}}{2^8\sqrt{\pi}}\right\}.
\end{align*}
Now, let us notice that
\begin{align*}
w(x)&=\int_{\pi|x|^2}^{|\Omega|}\frac{1}{4\pi t}\left(\int_0^t f^*(s)\:ds\right)dt+\frac{\|f\|_1|\Omega|^{-\frac{1}{2}}}{2\beta \sqrt{\pi}}\\
&=\int_{\pi|x|^2}^{|\Omega|}\frac{1}{4\pi t}\left(\int_0^t (\lambda_\beta(\Omega) u)^*(s)\:ds\right)dt +\frac{\lambda_\beta(\Omega)\|u\|_1|\Omega|^{-\frac{1}{2}}}{2\beta \sqrt{\pi}}
\end{align*}

and so
$$\int_{\Omega^{\sharp}}w^2\:dx\le\lambda^2_\beta(\Omega)\|u^{\sharp}\|^2_1\int_{\Omega^\sharp}\left[\frac{1}{4\pi}\log\left(\frac{\abs{\Omega}}{\pi\abs{x}^2}\right)+\frac{\abs{\Omega}^{-\frac{1}{2}}}{2\beta\sqrt{\pi}}\right]^2dx.$$

Moreover, by direct computations 

$$\int_{\Omega^\sharp}\left[\frac{1}{4\pi}\log\left(\frac{\abs{\Omega}}{\pi\abs{x}^2}\right)+\frac{\abs{\Omega}^{-\frac{1}{2}}}{2\beta\sqrt{\pi}}\right]^2dx=\dfrac{1}{4\pi}\left(\dfrac{\abs{\Omega}}{2\pi}+\dfrac{1}{\pi\beta^2}+\dfrac{\sqrt{\abs{\Omega}}}{\beta \sqrt{\pi}}\right)$$
Thus, we get
\begin{equation}\label{eq:unif_cost}
\dfrac{C_2}{2\int_{\Omega^\sharp}w^2\:dx}\ge\frac{\min\left\{\frac{1}{2^6\gamma_2}, \frac{\beta\abs{\Omega}^\frac{1}{2}}{2^8\sqrt{\pi}}\right\}}{2\beta^2\left(\dfrac{\abs{\Omega}}{2\pi}+\dfrac{1}{\pi\beta^2}+\dfrac{\sqrt{\abs{\Omega}}}{\beta \sqrt{\pi}}\right)}:=C_5.
 \end{equation}
This latter is explicitly a positive constant depending only on $|\Omega|$ and $\beta$, thereby establishing the claim and concluding the proof.

\end{proof}

 Notice that, as highlighted in the the introduction, neither Corollary \ref{cor:sv} nor Corollary \ref{cor:bd} imply their quantitative Dirichlet counterpart. Indeed, letting $\beta\to+\infty$, the Robin torsional rigidity  $T_\beta(\cdot)$ and the first Robin eigenvalue $\lambda_\beta(\cdot)$ tend to the the Dirichlet torsional rigidity $T(\cdot)$ and to the first Dirichlet eigenvalue $\lambda_\beta(\cdot)$ respectively, but both constants $C_4$ and $C_5$ (and so the right-hand sides) vanish.

\subsection{Open problems}
We conclude by collecting a list of open problems arising from our results:

\begin{itemize}
    \item Is it possible to bound from below the difference of the Lorentz norm in terms of the quantity 
    $$\norma{v}_{L^{2k,2}(\Omega^\sharp)}- \norma{u}_{L^{2k,2}(\Omega)} {\ge} K_1\norma{u-u^\sharp}_{L^r}^{\theta_1}+ K_2 \norma{f- f^\sharp}_{L^s}^{\theta_2}$$
    as in the Dirichlet case? In this regard, one should find an alternative proof which does not rely on the P\'olya-Szeg\H o principle.

    \item Is it possible to obtain a quantitative estimate in terms of  the Fraenkel asymmetry with exponent $2$ instead of  $3$ in \eqref{stima:quant:punt}? We conjecture that $2$ is the optimal exponent.

\end{itemize}

\section{Acknowledgement}
The authors were partially supported by Gruppo Nazionale per l’Analisi Matematica, la Probabilità e le loro Applicazioni
(GNAMPA) of Istituto Nazionale di Alta Matematica (INdAM).  

Vincenzo Amato was partially supported by  PNRR - Missione 4 “Istruzione e Ricerca” - Componente 2 “Dalla Ricerca all'Impresa” - Investimento 1.2 “Finanziamento di progetti
presentati da giovani ricercatori” CUP:F66E25000010006.

Rosa Barbato  was supported by the Innovation Grant SCCALC, through the
Spoke 1 (Future HPC) of the National Center for HPC, Big Data, and
Quantum Computing

Simone Cito was supported by "INdAM - GNAMPA Project", codice CUP E5324001950001 and  by the Project "Stochastic Modeling of Compound Events"
Funded by the Italian Ministry of University and Research (MUR) under the PRIN 2022 PNRR (cod. P2022KZJTZ) in the framework of European Union - Next Generation EU (CUP: F53D23010060001).

Gloria Paoli was supported by "INdAM - GNAMPA Project", codice CUP E5324001950001
    and  by the Project PRIN 2022 PNRR:  "A sustainable and trusted Transfer Learning platform for Edge Intelligence (STRUDEL)", CUP E53D23016390001, in the framework of European Union - Next Generation EU program
\addcontentsline{toc}{chapter}{Bibliografia}
\bibliographystyle{plain}
\bibliography{bibiografia}

\Addresses

\end{document}